\theoremstyle{plain}
\newtheorem{theorem}{Theorem}[section]
\newtheorem{proposition}{Proposition}[section]
\newtheorem{corollary}{Corollary}[section]
\newtheorem{remark}{\bf Remark}[section]
\theoremstyle{definition}
\renewcommand{\div}{\mathop\mathrm{div}}
\newcommand{\Tr}{\mathop\mathrm{Tr}}
\title
[Lieb--Thirring inequalities on the torus ]
 {Lieb--Thirring inequalities on the torus}
\author[A.Ilyin and A.Laptev] {Alexei  Ilyin and Ari Laptev}
\begin{document}
 \begin{abstract}
We consider  the Lieb--Thirring inequalities on the
$d$-dimen\-sional torus with arbitrary periods. In the space of
functions with zero average with respect to the shortest coordinate
we prove the Lieb--Thirring inequalities for the $\gamma$-moments
of the negative eigenvalues with constants independent of ratio of
the periods. Applications to the attractors of the damped
Navier--Stokes system are given.
\end{abstract}

\subjclass[2010]{35P15, 26D10, 46E35, 35B41}
\keywords{Lieb--Thirring inequalities, Schr\"odinger operators, interpolation inequalities,
attractors, fractal dimension.}

\address
{\noindent\newline  Keldysh Institute of Applied Mathematics;
\newline
Imperial College London and Institute Mittag--Leffler;}
\email{ ilyin@keldysh.ru;  a.laptev@imperial.ac.uk}

\maketitle

\setcounter{equation}{0}
\section{Introduction}
\label{sec0}

The Lieb--Thirring inequalities~\cite{LT} give estimates for the
$\gamma$-moments of the negative eigenvalues $-\nu_j$  of the
Schr\"odinger operator
\begin{equation}\label{Schr}
-\Delta-V
\end{equation}
in $L_2(\mathbb{R}^d)$:
\begin{equation}\label{est-L-T}
\sum_{\nu_i\le0}|\nu_i|^\gamma\le\mathrm{L}_{\gamma,d}
 \int V(x)^{\gamma+ d/2}dx.
\end{equation}
Here $V\ge0$ is a real valued potential sufficiently fast decaying at  infinity.

Sharp values of the constants $\mathrm{L}_{\gamma,d}$ for all $\gamma\ge3/2$ and all $d$
were found in~\cite{Lap-Weid}:
\begin{equation*}\label{LW32}
\mathrm{L}_{\gamma,d}=\mathrm{L}_{\gamma,d}^{\mathrm{cl}},
\end{equation*}
where
\begin{equation}\label{class}
\mathrm{L}_{\gamma,d}^{\mathrm{cl}} = \frac{1}{(2\pi)^d}\, \int_{\mathbb R^d}  (1-|\xi|^2)_+^\gamma\, d\xi=
\frac{\Gamma(\gamma+1)}{2^d\pi^{d/2}\Gamma(\gamma+d/2+1)}
\end{equation}
is the semi-classical constant.
For $1\le\gamma<3/2$ the best known bounds for $\mathrm{L}_{\gamma,d}$
 were found in~\cite{D-L-L}:
\begin{equation}\label{DLL}
\mathrm{L}_{\gamma,d}\le\frac\pi{\sqrt{3}}\mathrm{L}_{\gamma,d}^{\mathrm{cl}}.
\end{equation}

 Both in~\cite{Lap-Weid} and~\cite{D-L-L} (see also~\cite{B-L} and \cite{H-L-W})
the key role is played by  the corresponding one-dimensional Lieb--Thirring
estimates for matrix-valued potentials.

If we consider the Lieb--Thirring inequalities on a compact
manifold $M$, then we have to take care on the simple eigenvalue
$0$ of the Laplacian, so that instead of~\eqref{Schr} we consider the
Schr\"odinger operator
\begin{equation*}\label{SchrM}
-\Delta-\Pi(V\Pi\cdot),
\end{equation*}
where $\Pi$ is the orthoprojection
\begin{equation*}\label{Pi}
\Pi\varphi(x)=\varphi(x)-\frac1{|M|}\int_{M}\varphi(x)dM,
\end{equation*}
and $|M|$ is the measure of $M$. Then estimate~\eqref{est-L-T} still
holds on $M$ with constant $\mathrm{L}_{\gamma,d}=\mathrm{L}_{\gamma,d}(M)$
depending on the geometric properties of $M$.

The spectral inequality~(\ref{est-L-T}) for $1$-moments
 is equivalent to the inequality for orthonormal families.
Let $\{\varphi_j\}_{j=1}^N\in H^1(\mathbb{R}^d)$ be an
orthonormal family in $L_2(\mathbb{R}^d)$.
(In the case of a manifold we have to assume that $\int_M\varphi_j(x)dM=0$.)
Then
$\rho_\varphi(x):=\sum_{j=1}^N\varphi_j(x)^2$ satisfies the
inequality
\begin{equation}\label{L-T-orth-orig}
\int\rho_\varphi(x)^{1+2/d}dx\le
\mathrm{k}_{d}\sum_{j=1}^N\|\nabla\varphi_j\|^2,
\end{equation}
where the best constants $\mathrm{k}_{d}$ and  $\mathrm{L}_{1,d}$
satisfy \cite{LT}, \cite{Lieb}
\begin{equation}\label{k=L}
\mathrm{k}_{d}=(2/d)(1+d/2)^{1+2/d}\mathrm{L}_{1,d}^{2/d}.
\end{equation}

In addition to the initial quantum mechanical applications
inequality~(\ref{L-T-orth-orig}) is essential for finding good
estimates for the dimension of the attractors  in the theory of
infinite dimensional dissipative dynamical systems, especially, for
the attractors of the Navier--Stokes equations (see, for instance,
\cite{Lieb}, \cite{B-V}, \cite{CF88},\cite{G-M-T}, \cite{I-MS2005},
\cite{T} and the references therein). Lieb-Thirring
inequalities~(\ref{L-T-orth-orig}) were generalized to higher-order
elliptic operators on domains with various boundary conditions and
Riemannian manifolds \cite{G-M-T}, \cite{T}, however, with no
information  the corresponding constants. A different approach
based on the methods of trigonometric series was proposed
in~\cite{Kashin}.

For the two-dimensional square torus $\mathbb{T}^2$ an explicit
bound for the Lieb--Thirring constant
$\mathrm{L}_{1,2}(\mathbb{T}^2)$ was obtained in~\cite{I12JST}:
\begin{equation}\label{T2}
\mathrm{L}_{1,2}(\mathbb{T}^2)\le\frac38\,.
\end{equation}
Following the original approach in~\cite{LT} the Birman--Schwinger
kernel was used in~\cite{I12JST}, and the bound $3/8$ is the same
as that for $\mathbb{R}^2$ in~\cite{LT}.

The Lieb--Thirring constants on the torus depend only on the ratio of the periods
and on the elongated torus $\mathbb{T}^2_\alpha=(0,2\pi/\alpha)\times(0,2\pi)$
are unbounded as $\alpha\to0$. This is due to the fact that the space $L_2(\mathbb{T}^2_\alpha)$
contains the subspace of (periodic) functions depending only on the long coordinate
$x_1$, which gives that, for example, for the $1$-moments we have
$$
\mathrm{L}_{1,2}(\mathbb{T}^2_\alpha)=\frac{\mathrm{L}_{1,2}(\mathbb{T}^2)}{\alpha}\,.
$$
To see this we extend the functions in the direction $x_2$ by periodicity
$1/\alpha$ times (assuming that $1/\alpha$ is integer) to reduce the treatment to the
square torus, see~\cite{G-T}.

The orthogonal complement to the subspace $L_2(0,2\pi/\alpha)$ of functions
depending only on $x_1$ consists of functions $\varphi(x_1,x_2)$ that
have zero average with respect to $x_2$ for all $x_1$:
$$
\biggl\{\varphi(x),\ \int_0^{2\pi}\varphi(x_1,t)dt=0,\ \forall x_1\in(0,2\pi/\alpha)\biggr\}.
$$
Let $\mathrm{P}$ denote the corresponding orthoprojection.  Then the Lieb--Thirring
constants for the operator
\begin{equation}\label{SchrP}
\mathcal{H}=-\Delta-\mathrm{P}(V(x)\mathrm{P}\,\cdot)
\end{equation}
on $\mathbb{T}_\alpha^2$ are independent of $\alpha$ (more precisely, have $\alpha$-independent upper bounds). This was first observed in~\cite{Z}, and explicit estimates were obtained in~\cite{I-MS2005}.

In this work we consider Lieb--Thirring inequalities for the operator \eqref{SchrP} on the $d$-dimensional torus $\mathbb{T}^d_\alpha$
\begin{equation}\label{T2a}
\mathbb{T}^d_\alpha=(0,L_1)\times\dots\times(0,L_{d-1})
\times(0,2\pi),
\end{equation}
where $L_i=2\pi/\alpha_i$ and  the lengths of the periods are arranged in the non-increasing order
\begin{equation}\label{alpha}
\alpha_1\le\dots\le\alpha_{d-1}\le\alpha_d=1.
\end{equation}
Here $\mathrm{P}$ is the orthoprojection
\begin{equation}\label{projP1}
(\mathrm{P}\psi)(x_1,\dots,x_d)=\psi(x_1,\dots,x_d)-\frac1{2\pi}\int_0^{2\pi}
\psi(x_1,\dots,x_{d-1},t)dt,
\end{equation}
so that the resulting function  has zero average  with respect to shortest
coordinate  $x_d$ for all $x_1,\dots,x_{d-1}$.

We can now state our main result.

\begin{theorem}\label{T:intro} Let $d\le19$. Then for any $\alpha$
satisfying~\eqref{alpha}
the negative eigenvalues of the  operator~\eqref{SchrP} on the torus $\mathbb{T}^d_\alpha$
satisfy for $\gamma\ge1$  the bound~\eqref{est-L-T} with
$$
\mathrm{L}_{\gamma,d}\le\left(\frac\pi{\sqrt{3}}\right)^d\,
\mathrm{L}^\mathrm{cl}_{\gamma,d},
$$
where $\mathrm{L}^\mathrm{cl}_{\gamma,d}$ is the semi-classical constant~\eqref{class}.
\end{theorem}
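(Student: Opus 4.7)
The plan is to adapt the Laptev--Weidl lifting scheme to the torus and combine it with the Dolbeault--Laptev--Loss bound~\eqref{DLL} at every step of an induction on $d$. The crucial structural input is that the projection $\mathrm{P}$ removes the zero Fourier mode in the shortest coordinate, so that on the admissible subspace $-\partial_{x_d}^2\geq 1$; this spectral gap is what allows estimates independent of the period ratios $\alpha_i$ in the longer directions, which are otherwise unprojected.

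For the inductive step, single out the longest coordinate $x_1$ and view $\mathcal{H}$ as a one-dimensional Schr\"odinger operator with operator-valued potential on $L^2((0,L_1);\mathcal{K})$, with $\mathcal{K}=L^2(\mathbb{T}^{d-1}_{\alpha'})$:
\[
\mathcal{H}=-\partial_{x_1}^2\otimes I+A(x_1),\qquad A(x_1)=-\Delta_{x_2,\dots,x_d}-\mathrm{P}V(\cdot,x_1)\mathrm{P}.
\]
The one-dimensional matrix-valued Lieb--Thirring inequality on $(0,L_1)$ with the DLL constant would then give
\[
\sum_j|\lambda_j(\mathcal{H})|^\gamma \leq \frac{\pi}{\sqrt{3}}\,\mathrm{L}^{\mathrm{cl}}_{\gamma,1}\int_0^{L_1}\mathrm{Tr}_{\mathcal{K}}\,[A(x_1)]_-^{\gamma+1/2}\,dx_1.
\]
The fiber operator $A(x_1)$ is itself of the form~\eqref{SchrP} on $\mathbb{T}^{d-1}_{\alpha'}$, so the inductive hypothesis (with $\gamma$ replaced by $\gamma+1/2\geq 3/2$, well inside the allowed range) controls the integrand by $(\pi/\sqrt{3})^{d-1}\,\mathrm{L}^{\mathrm{cl}}_{\gamma+1/2,d-1}\int V(\cdot,x_1)^{\gamma+d/2}\,dx'$. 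Integrating in $x_1$ and using the telescoping identity $\mathrm{L}^{\mathrm{cl}}_{\gamma,1}\,\mathrm{L}^{\mathrm{cl}}_{\gamma+1/2,d-1}=\mathrm{L}^{\mathrm{cl}}_{\gamma,d}$, which is immediate from~\eqref{class}, consolidates the constants into exactly $(\pi/\sqrt{3})^d\,\mathrm{L}^{\mathrm{cl}}_{\gamma,d}$. The base case $d=1$ is the one-dimensional LT on the zero-mean subspace of $L^2(0,2\pi)$, reducible via the gap $-\partial_x^2\geq 1$ to the $\mathbb{R}$-version of \cite{D-L-L}.

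The main obstacle is exactly the one-dimensional matrix-valued Lieb--Thirring inequality on the periodic interval $(0,L_1)$ with a DLL constant independent of $L_1$. On $\mathbb{R}$ this is \cite{D-L-L}; naively on a long periodic interval it would fail because of the zero Fourier mode of $-\partial_{x_1}^2$. The way around this is that at every iteration the matrix-valued potential $A(x_1)$ acts on $\mathcal{K}$, and its negative spectral subspace lies in the range of $\mathrm{P}$, where $-\partial_{x_d}^2\geq 1$ provides global coercivity that compensates for the absent gap in the $x_1$ direction. One should then be able to transplant the DLL trigonometric-series argument to the interval with the same constant. The dimensional restriction $d\leq 19$ most plausibly reflects a specific numerical check entering the DLL-type proof in this periodic matrix-valued setting, which holds cleanly only up to this dimension and has to be verified in a case-by-case fashion.
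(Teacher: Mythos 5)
Your overall framework — Laptev--Weidl lifting reducing the $d$-dimensional problem to a one-dimensional matrix-valued Lieb--Thirring inequality, together with the observation that the projection $\mathrm{P}$ gives $-\partial_{x_d}^2\ge1$ on its range — matches the paper's strategy. But the crucial step, which you defer to ``one should then be able to transplant the DLL trigonometric-series argument to the interval with the same constant,'' is precisely where the work lies, and as stated it does not go through. On the periodic interval $(0,L_1)$ the operator $-\partial_{x_1}^2$ has the zero Fourier mode, and a one-dimensional matrix LT bound with a constant independent of $L_1$ is false in general: for a negative constant fiber-potential of small magnitude $a$, the $k=0$ mode alone contributes $a^\gamma$ to the left side while the right side scales like $L_1 a^{\gamma+1/2}$, so the constant would have to blow up as $a\to0$. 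Saying that the coercivity $-\partial_{x_d}^2\ge1$ ``compensates'' is the right intuition, but you never make it into an actual mechanism that closes the zero-mode gap.

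The paper's mechanism is a quantitative shift-and-redistribute scheme that you would have to rediscover. One decomposes the Laplacian as
\[
-\Delta=\sum_{j=1}^{d-1}\left(-\partial_{x_j}^2+\alpha_j^2\beta_j\right)+\left(-\partial_{x_d}^2-\delta\right),\qquad \delta=\sum_{j=1}^{d-1}\alpha_j^2\beta_j,
\]
borrowing strictly positive shifts $\alpha_j^2\beta_j$ for the long directions (which removes their zero modes and makes them invertible on the full periodic space) and paying for them with a downward shift $\delta<1$ in the $x_d$ direction (still invertible on $\dot L_2$ thanks to the unit gap). The one-dimensional matrix LT bounds then come, via the interpolation constants $K_1(\beta)$ and $K_2(\delta)$ of Section~2, with a factor $K_1(\beta_j)$ (resp.\ $K_2(\delta)$) times $\frac{\pi}{\sqrt{3}}\mathrm{L}^{\mathrm{cl}}_{\gamma,1}$. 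One has $K_1(\beta)=1$ for $\beta\ge\beta_*\approx0.045$ and $K_2(\delta)=1$ for $\delta\le\beta_{**}\approx0.839$. The restriction $d\le19$ is therefore not a ``case-by-case numerical check in the DLL proof'' as you speculate; it is exactly the consistency condition $(d-1)\beta_*\le\beta_{**}$, which ensures that taking $\beta_j=\beta_*$ keeps $\delta\le(d-1)\beta_*\le\beta_{**}$, so that every $K_1(\beta_j)=K_2(\delta)=1$ and only the accumulated DLL factors $(\pi/\sqrt{3})^d$ survive. Without this shift-redistribution device, and without the two interpolation lemmas quantifying $K_1,K_2$, your argument does not close.
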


The main idea is to write the Laplacian in~\eqref{SchrP} in the form
$$
-\Delta=\left(-\frac{d^2}{dx_1^2}+\alpha_1^2\beta_1\right)+\dots+
\left(-\frac{d^2}{dx_{d-1}^2}+\alpha_{d-1}^2\beta_{d-1}\right)+
\left(-\frac{d^2}{dx_d^2}-\delta\right),
$$
where
$$
\delta=\alpha_1^2\beta_1+\dots+\alpha_{d-1}^2\beta_{d-1},
$$
and where
$\beta_j>0$ are chosen so that $\delta<1$.

For $j=1,\dots,d-1$ each operator
$$
-\frac{d^2}{dx_{j}^2}+\alpha^2\beta_j,\qquad j=1,\dots,d-1
$$
is invertible on $L_2(0,2\pi/\alpha_j)_{\mathrm{per}}$, while for
$\delta<1$ the operator
$$
-\frac{d^2}{dx_d^2}-\delta
$$
is invertible on $\dot L_2(0,2\pi)=L_2(0,2\pi)\cap\{\psi:\
\int_0^{2\pi}\psi(t)dt=0\}$.

Accordingly, in Section~\ref{sec1}, we consider for these two types of operators the
interpolation inequalities of $L_\infty\!-\!L_2\!-\!L_2$ type:\begin{equation}\label{ineqadd}
\|u\|_\infty^2\le K_1(\beta)
\left(\int_0^{2\pi/\alpha}\bigl(u'(x)^2+\alpha^2\beta u(x)^2\bigr)dx\right)^{1/2}\!
\left(\int_0^{2\pi/\alpha}u(x)^2dx\right)^{1/2},
\end{equation}
where $\beta>0$ and $u\in H^1(0,2\pi/\alpha)_\mathrm{per}$; and
\begin{equation}\label{ineqsubtract}
\|u\|_\infty^2\le K_2(\beta)
\left(\int_0^{2\pi}\bigl(u'(x)^2-\beta u(x)^2\bigr)dx\right)^{1/2}\!
\left(\int_0^{2\pi}u(x)^2dx\right)^{1/2},
\end{equation}
where $\beta<1$ and   $u\in \dot H^1(0,2\pi/\alpha)_\mathrm{per}=H^1(0,2\pi/\alpha)\cap\int_0^{2\pi}u(x)dx=0$.
We find sharp constants in these inequalities and, in particular, show that
$K_1(\beta)=1$ for $\beta\ge\beta_*=0.045\dots$ and
$K_2(\beta)=1$ for $\beta\le\beta_{**}=0.839\dots$.

In Section~\ref{sec2} we use the information on the sharp constants
$K_1(\beta)$ and $K_2(\beta)$ to obtain following \cite[Theorem
6.1]{IMRN} one dimensional inequalities of the type
\eqref{L-T-orth-orig} for traces of matrices built from
orthonormal families of \emph{vector} functions. Then using the
equivalence
 \eqref{k=L} we recast these results into the Lieb--Thirring inequalities
 for the negative eigenvalues $\{-\lambda_j\}$ and $\{-\mu_j\}$ of the one dimensional operators with \emph{matrix}-valued potentials
\begin{align}
H_1(\beta)&=-\frac {d^2}{dx^2}+\alpha^2\beta-V(x),\quad\beta>0,
\quad x\in(0,2\pi/\alpha),\label{H1}\\
H_2(\delta)&=-\frac {d^2}{dx^2}-\delta-\mathrm{P}(V(x)\mathrm{P}\cdot),\quad \delta\in[0,1),
\quad x\in(0,2\pi),\label{H2}
\end{align}
acting on $L_2(0,2\pi/\alpha)$ and $\dot L_2(0,2\pi)=\{f\in L_2(0,2\pi),\
\int_0^{2\pi}f(x)dx=0\}$, respectively.

For a non-negative $M\times M$ Hermitian matrix $V$ we obtain
\begin{align}
&\sum_j\lambda_j\le\frac 2{3\sqrt{3}}K_1(\beta)
\int_0^{2\pi/\alpha}  \Tr[V(x)^{3/2}]dx,\label{neg1}\\
&\sum_j\mu_j\le\frac 2{3\sqrt{3}}K_2(\delta)
\int_0^{2\pi}  \Tr[V(x)^{3/2}]dx.\label{neg2}
\end{align}
Then we use  the Aizenmann-Lieb argument \cite{AisLieb}
to obtain the  estimates for the Riesz means of order $\gamma\ge1$.

In Section~\ref{sec3} we use the lifting argument with respect to
dimensions~\cite{Lap-Weid} and the one-dimensional Lieb--Thirring
inequalities for matrix-valued potentials from Section~\ref{sec2} to
prove Theorem~\ref{T:ddim} (to which Theorem~\ref{T:intro} is a corollary).
 We point out here that the condition $d\le19$ (which is $d\le [\beta_{**}/\beta_*]+1$)
makes it possible to chose $\beta_j\ge\beta_*$ and
$\delta\le\beta_{**}$ so that in \eqref{neg1}, \eqref{neg2} we have
$K_1(\beta_j)=1$ and $K_2(\delta)=1$.

Finally, in Section~\ref{sec3} we give applications to
the attractors of  the damped-driven Navier--Stokes equations
on the square and elongated torus and improve the estimates of the
dimension of the attractor obtained earlier in~\cite{I-M-T} and \cite{I-T}.

\setcounter{equation}{0}
\section{Two interpolation inequalities}
\label{sec1}
\subsection*{First inequality.}
Let $x\in[0,2\pi/\alpha]_{\mathrm{per}}$ and let $\alpha>0$. We consider the
 interpolation inequality~\eqref{ineqadd}
where $u\in H^1(0,2\pi/\alpha)_{\mathrm{per}}$,  $\beta>0$
 (and no orthogonality to constants is assumed).
 More precisely, we are interested in the value of the sharp
constant $K_1(\beta)$ in this inequality.

The system
$$
\left\{\sqrt{\frac\alpha{2\pi}}\,e^{ik\alpha x}\right\}_{k\in\mathbb{Z}}
$$
is a complete orthonormal system of eigenfunctions of the operator
$-\frac{d^2}{dx^2}$ with periodic boundary conditions. Therefore the
Green's function $G_\lambda(x,\xi)$, that is, the solution
of the equation
$$
\mathbb{A}(\lambda)G_\lambda(x,\xi)=\delta(x-\xi),
$$
where
$$
\mathbb{A}(\lambda)=-\frac{d^2}{dx^2}+\alpha^2\beta+\lambda,\qquad\lambda>0,
$$
is given by the series
$$
G_\lambda(x,\xi)=\frac\alpha{2\pi}\sum_{k\in\mathbb{Z}}
\frac{e^{ik\alpha(x-\xi)}}{\alpha^2k^2+\alpha^2\beta+\lambda}\,.
$$
On the diagonal
\begin{equation}\label{G1}
G_\lambda(\xi,\xi)=\frac\alpha{2\pi}\sum_{k\in\mathbb{Z}}
\frac1{\alpha^2k^2+\alpha^2\beta+\lambda}=:g_\beta(\lambda).
\end{equation}
Here we omit the subscript $\alpha$ on the right because $\alpha$ is fixed and,
secondly, the sharp constant $K_1(\beta)$ is independent of
$\alpha$, as our final result shows.
Using the general result (Theorem~2.2 in~\cite{IZ} with $\theta=1/2$)
we have the following expression for the sharp constant
$K_1(\beta)$:
\begin{equation}\label{Kbeta}
\aligned
K_1(\beta)=
\frac{1}{\theta^\theta(1-\theta)^{1-\theta}}\cdot
\sup_{\lambda>0}\lambda^{\theta}g_\beta(\lambda)=
2\sup_{\lambda>0}\sqrt{\lambda}g_\beta(\lambda)
\endaligned
\end{equation}
(We point out that for the proof of \eqref{Kbeta} we can use the direct argument similar to the one used in  Remark~3.1 in \cite{IMRN}.)
 Next, using the formula
\begin{equation}\label{formsum}
\sum_{k\in\mathbb{Z}}\frac1{k^2+\lambda}=
\frac{\pi\coth(\pi\sqrt\lambda\,)}{\sqrt{\lambda}}
\end{equation}
in \eqref{G1} we find that
$$
g_\beta(\lambda)=\frac1{2\alpha}\frac{\coth(\pi\sqrt{\beta+\lambda/\alpha^2})}
{\sqrt{\beta+\lambda/\alpha^2}}\,.
$$
Since $\alpha>0$ is fixed, we can replace the variable $\lambda$ in
\eqref{Kbeta} by $\alpha^2\lambda$, which finally gives
\begin{equation}\label{Kbeta1}
K_1(\beta)=
\sup_{\lambda>0}\sqrt{\lambda}\,
\frac{\coth(\pi\sqrt{\beta+\lambda})}
{\sqrt{\beta+\lambda}}\,.
\end{equation}
Since for every fixed $\beta>0$
$$
\lim_{\lambda\to\infty}\sqrt{\lambda}\,
\frac{\coth(\pi\sqrt{\beta+\lambda})}
{\sqrt{\beta+\lambda}}=1,
$$
it follows that
$$
K_1(\beta)\ge1.
$$
Next, \eqref{G1} and \eqref{Kbeta}  show that
$K_1(\beta)$ is monotone non-increasing
$$
\beta_1<\beta_2\quad\Rightarrow\quad g_{\beta_1}(\lambda)>g_{\beta_2}(\lambda)
\quad\Rightarrow\quad K_1(\beta_1)\ge K_1(\beta_2),
$$
and, finally,
$$
K_1(\beta_0)=1 \quad\Rightarrow\quad K_1(\beta)=1\quad \text{for}\quad \beta\ge\beta_0.
$$

The graphs of the function $\lambda\to\sqrt{\lambda}\,
\frac{\coth(\pi\sqrt{\beta+\lambda})}
{\sqrt{\beta+\lambda}}$ are shown in Fig.~\ref{fig:K1}.
For very small $\beta$ the graphs have a sharp peak near
the origin. For  $\beta_*=0.045\dots$ (found numerically)
the supremum is equal to $1$. Hence $K_1(\beta)=1$
for all $\beta\ge\beta_*$.

Thus, we have proved the following theorem.
\begin{theorem}\label{Th:K1}
The sharp constant $K_1(\beta)$ in inequality~\eqref{ineqadd}
is given by \eqref{Kbeta1}. Furthermore,
for all $\beta\ge\beta_*=0.045\dots$, $K_1(\beta)=1$.
\end{theorem}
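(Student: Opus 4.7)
The plan is to reduce $K_1(\beta)$ to an explicit one-variable supremum by means of a reproducing-kernel argument, obtain a closed form using classical Fourier summation, and then extract the threshold $\beta_*$ from the resulting transcendental expression.

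The starting point is the general reproducing-kernel framework for $L_\infty$--$L_2$--$L_2$ interpolation inequalities: Theorem~2.2 of~\cite{IZ} with $\theta=1/2$ (the same conclusion can be obtained directly by the argument of Remark~3.1 in~\cite{IMRN}), applied to the positive self-adjoint operator $\mathbb{A}_0=-d^2/dx^2+\alpha^2\beta$ on $L_2(0,2\pi/\alpha)_{\mathrm{per}}$. The underlying pointwise bound is the Cauchy--Schwarz estimate
$$
|u(\xi)|^2\le g_\beta(\lambda)\bigl(\|u'\|^2+\alpha^2\beta\|u\|^2+\lambda\|u\|^2\bigr),\qquad\lambda>0,
$$
obtained from the reproducing identity $u(\xi)=\langle u,G_\lambda(\cdot,\xi)\rangle_{\mathbb{A}_0+\lambda}$, and AM--GM optimization over $\lambda$ yields the sharp formula $K_1(\beta)=2\sup_{\lambda>0}\sqrt\lambda\,g_\beta(\lambda)$.

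I would then compute $g_\beta(\lambda)=G_\lambda(\xi,\xi)$ by Fourier expansion in complex exponentials, which produces the diagonal series~\eqref{G1}, and sum it by the classical identity~\eqref{formsum}. Substituting this closed form into the sharp-constant expression and rescaling $\lambda\mapsto\alpha^2\lambda$ to absorb the period produces~\eqref{Kbeta1}, proving the first assertion. Two qualitative features of~\eqref{Kbeta1} then settle the structure of the level sets of $K_1$: the asymptotic $\sqrt\lambda\coth(\pi\sqrt{\beta+\lambda})/\sqrt{\beta+\lambda}\to 1$ as $\lambda\to\infty$ gives $K_1(\beta)\ge 1$ for all $\beta$, while termwise strict monotonicity in $\beta$ of the summands in~\eqref{G1} shows that $\beta\mapsto g_\beta(\lambda)$ is strictly decreasing. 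Hence $K_1$ is non-increasing in $\beta$, and the set on which $K_1=1$ is a half-line $[\beta_*,\infty)$.

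The main technical hurdle is the rigorous identification of $\beta_*$. This amounts to the transcendental system obtained by imposing both $K_1(\beta)=1$ and vanishing of the $\lambda$-derivative of the objective in~\eqref{Kbeta1}, and as the graphs discussed after~\eqref{Kbeta1} indicate, for small $\beta$ the supremum is attained at a sharp interior peak near the origin. For the purposes of the paper the numerical value $\beta_*\approx 0.045$ is sufficient; a fully rigorous determination would require interval arithmetic or a careful analytic bound on the critical-point equation. I would expect this step to be the most delicate, even though conceptually it is only a calculus exercise supplementing the structural arguments above.
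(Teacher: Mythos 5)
Your proposal follows essentially the same route as the paper: invoke the reproducing-kernel formula $K_1(\beta)=2\sup_{\lambda>0}\sqrt{\lambda}\,g_\beta(\lambda)$ from Theorem~2.2 of~\cite{IZ} (or the direct argument of Remark~3.1 in~\cite{IMRN}), sum the diagonal Green's function by the $\coth$ identity~\eqref{formsum}, rescale to obtain~\eqref{Kbeta1}, and then combine the $\lambda\to\infty$ limit with monotonicity of $g_\beta$ in $\beta$ to see that $\{K_1=1\}$ is a half-line $[\beta_*,\infty)$. Your caveat about the rigor of the numerical determination of $\beta_*$ is well taken, but the paper treats this step in exactly the same informal way (a numerically located peak illustrated by plots), so no genuine gap separates your proposal from the published argument.
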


\begin{figure}[htb]
\centerline{\psfig{file=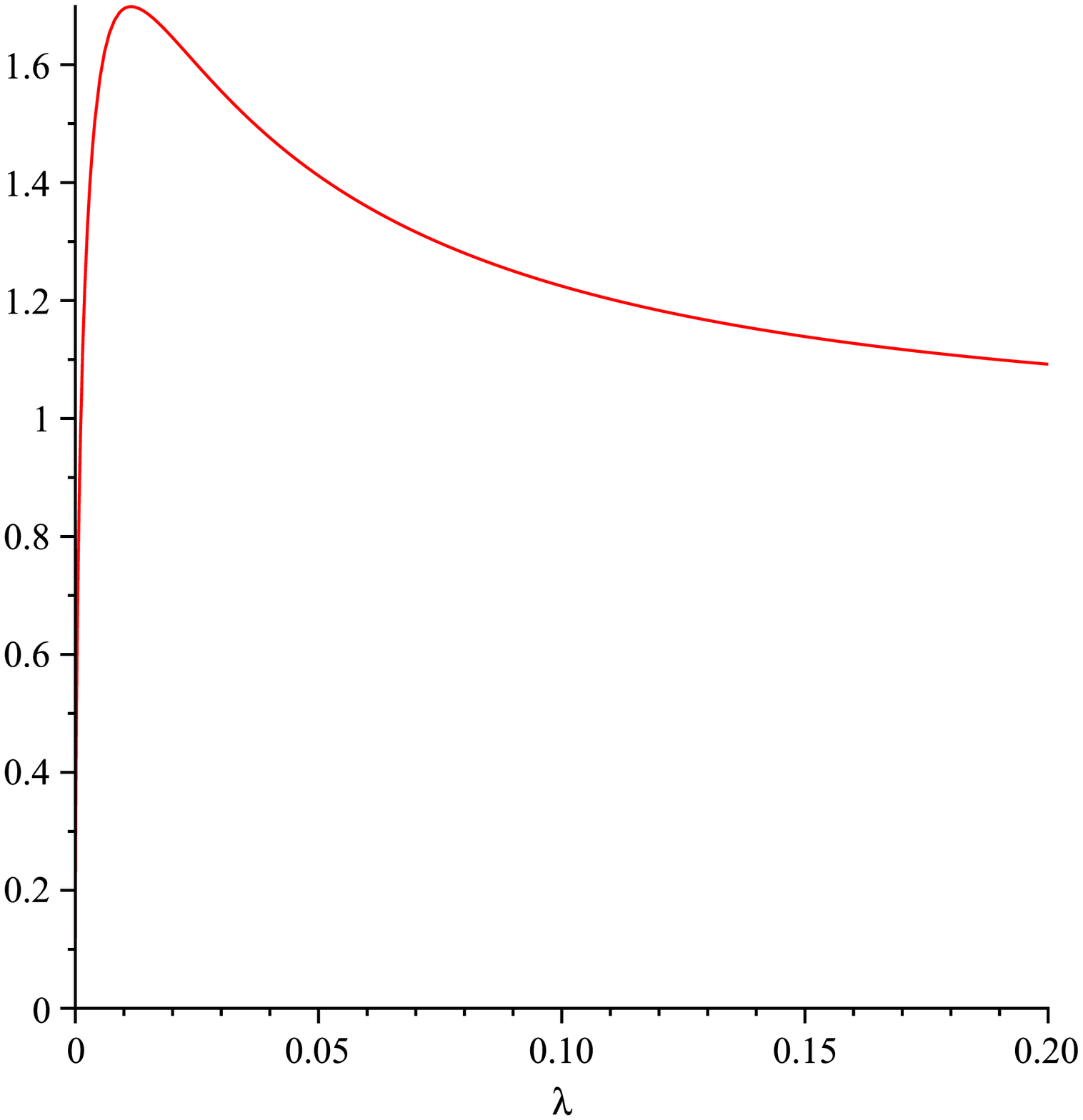,width=4cm,height=5cm,angle=0}
\psfig{file=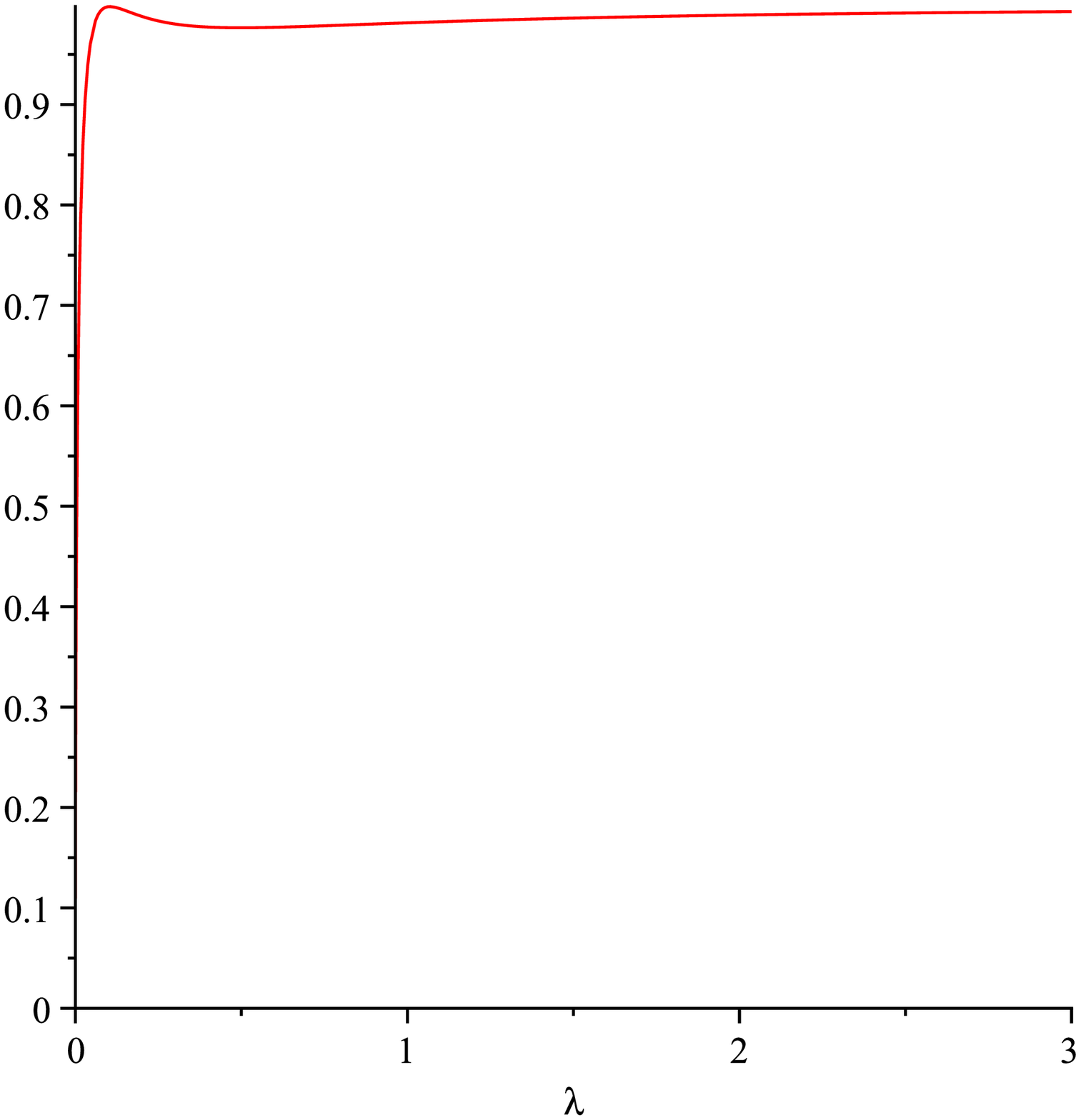,width=4cm,height=5cm,angle=0}
\psfig{file=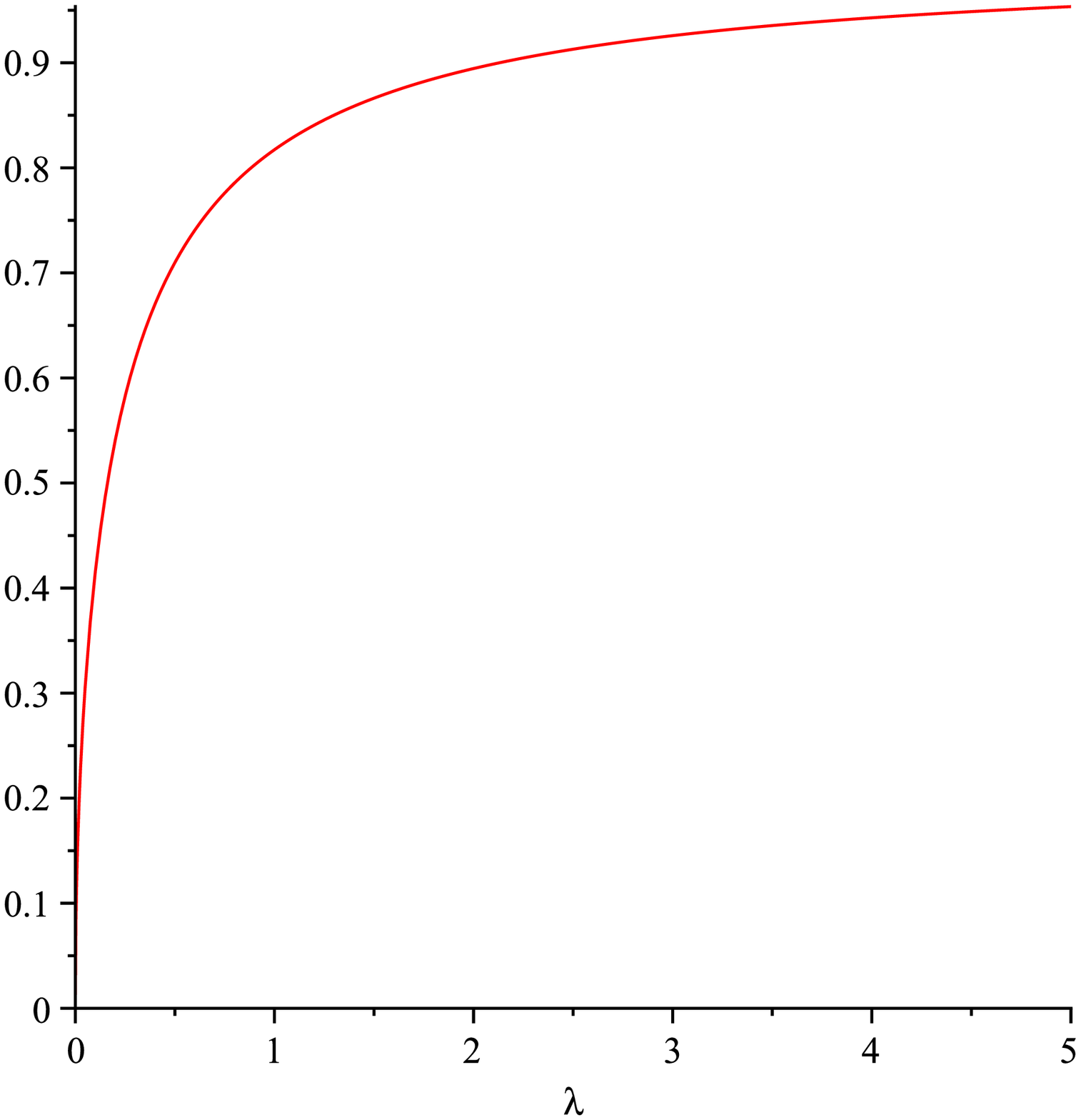,width=4cm,height=5cm,angle=0}
}
\caption{Graphs of $\lambda\to\sqrt{\lambda}\,
\frac{\coth(\pi\sqrt{\beta+\lambda})}
{\sqrt{\beta+\lambda}}$ for $\beta=0.01,\,\beta_*,\,0.5$.}
\label{fig:K1}
\end{figure}

\subsection*{Second inequality.}
Now let $x\in[0,2\pi]$, and  we
consider inequality~\eqref{ineqsubtract},
where $u\in \dot H^1(0,2\pi)_{\mathrm{per}}=H^1(0,2\pi)_{\mathrm{per}}\cap\int_0^{2\pi}u(x)dx=0$, and  $0\le\beta<1$.

The
Green's function $G_\lambda(x,\xi)$ of the operator
$$
\mathbb{A}(\lambda)=-\frac{d^2}{dx^2}-\beta+\lambda,\qquad\lambda>0,
$$
in the space of functions with mean value zero
is as follows
$$
G_\lambda(x,\xi)=\frac1{2\pi}\sum_{k\in\mathbb{Z}_0}
\frac{e^{ik(x-\xi)}}{k^2-\beta+\lambda}\,,\qquad\mathbb{Z}_0=\mathbb{Z}\setminus\{0\},
$$
and
\begin{equation}\label{G2}
G_\lambda(\xi,\xi)=\frac1{2\pi}\sum_{k\in\mathbb{Z}_0}
\frac1{k^2-\beta+\lambda}=:f_\beta(\lambda).
\end{equation}
As before we have the following expression for the sharp constant
$K_2(\beta)$:
 $$
 K_2(\beta)=
2\sup_{\lambda>0}\sqrt{\lambda}f_\beta(\lambda).
$$
Next, from \eqref{formsum} we obviously have
$$
\sum_{k\in\mathbb{Z}_0}\frac1{k^2+\lambda}=
\frac{\pi\sqrt{\lambda}\coth(\pi\sqrt\lambda\,)-1}{{\lambda}},
$$
giving
$$
f_\beta(\lambda)=\frac1{2}\frac{\sqrt{\lambda-\beta}\coth(\pi\sqrt{\lambda-\beta})-1/\pi}
{{\lambda-\beta}}\,,
$$
and, finally,
\begin{equation}\label{Kbeta2}
K_2(\beta)=
\sup_{\lambda>0}\sqrt{\lambda}\,
\frac{\sqrt{\lambda-\beta}\coth(\pi\sqrt{\lambda-\beta})-1/\pi}
{{\lambda-\beta}}\,.
\end{equation}
The existence of the limit
$$
\lim_{\lambda\to\infty}\sqrt{\lambda}\,
\frac{\sqrt{\lambda-\beta}\coth(\pi\sqrt{\lambda-\beta})-1/\pi}
{{\lambda-\beta}}=1,
$$
implies that
$$
K_2(\beta)\ge1.
$$
This time
$K_2(\beta)$ is monotone non-decreasing
$$
\beta_1<\beta_2\quad\Rightarrow\quad f_{\beta_1}(\lambda)<f_{\beta_2}(\lambda)
\quad\Rightarrow\quad K_1(\beta_1)\le K_1(\beta_2),
$$
and, finally,
$$
K_2(\beta_0)=1, \ 0<\beta_0<1 \quad\Rightarrow\quad K_2(\beta)=1\quad \text{for}\quad 0\le\beta\le\beta_0.
$$

The graphs of the function $\lambda\to\sqrt{\lambda}\,
\frac{\sqrt{\lambda-\beta}\coth(\pi\sqrt{\lambda-\beta})-1/\pi}
{{\lambda-\beta}}$ are shown in Fig.~\ref{fig:K2}, which is somewhat
symmetrical to Fig.~\ref{fig:K1}.
For  $\beta$ close to $1$ the graphs have a sharp peak near
the origin. For  $\beta_{**}=0.839\dots$ (again found numerically)
the supremum is equal to $1$ and is attained both at a finite $\lambda$ and
at infinity. Hence $K_1(\beta)=1$
for all $\beta\ge\beta_{**}$.

Thus, we have proved the following theorem.
\begin{theorem}\label{Th:K2}
The sharp constant $K_2(\beta)$ in inequality~\eqref{ineqsubtract}
is given by \eqref{Kbeta2}. Furthermore,
for all $\beta\le\beta_{**}=0.839\dots$, $K_2(\beta)=1$.
\end{theorem}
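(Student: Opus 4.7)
The plan is to follow the template of Theorem~\ref{Th:K1} essentially verbatim, the only substantive change being that one now works on the mean-zero subspace, which amounts to removing the $k=0$ term from every Fourier series. Since $u\in\dot H^1(0,2\pi)_{\mathrm{per}}$, Wirtinger's inequality gives $\int_0^{2\pi}(u'(x)^2-\beta u(x)^2)\,dx\ge(1-\beta)\|u\|^2_{L_2}$ for $\beta<1$, so the operator $\mathbb{A}(\lambda)=-d^2/dx^2-\beta+\lambda$ is invertible on $\dot L_2(0,2\pi)_{\mathrm{per}}$ for every $\lambda\ge 0$ and admits the Green's function $G_\lambda(x,\xi)$ whose on-diagonal value is $f_\beta(\lambda)$ as in \eqref{G2}.

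Next, inequality \eqref{ineqsubtract} is an $L_\infty$--$L_2$--$L_2$ interpolation of the form covered by Theorem~2.2 in~\cite{IZ} with $\theta=1/2$, so exactly as in \eqref{Kbeta} one obtains
$$
K_2(\beta)=2\sup_{\lambda>0}\sqrt{\lambda}\,f_\beta(\lambda).
$$
To evaluate $f_\beta$ in closed form, I would start from identity \eqref{formsum}, replace $\lambda$ by $\lambda-\beta$, and subtract the missing $k=0$ contribution $1/(\lambda-\beta)$; rearranging yields the closed-form expression for $f_\beta(\lambda)$ displayed immediately before \eqref{Kbeta2}, whence \eqref{Kbeta2} follows.

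It remains to establish the second assertion. Denote by $h_\beta(\lambda)$ the function inside the supremum in \eqref{Kbeta2}. Expanding $\coth(\pi\sqrt{\lambda-\beta})\to 1$ as $\lambda\to\infty$ shows $\lim_{\lambda\to\infty}h_\beta(\lambda)=1$, hence $K_2(\beta)\ge 1$ for all admissible $\beta$. Termwise monotonicity $f_{\beta_1}(\lambda)<f_{\beta_2}(\lambda)$ for $\beta_1<\beta_2$ yields the monotonicity $K_2(\beta_1)\le K_2(\beta_2)$; consequently $K_2(\beta_0)=1$ propagates to all $\beta\le\beta_0$. The delicate point, handled numerically, is the identification of the threshold $\beta_{**}=0.839\ldots$: as $\beta\to 1^-$ the summand $k=\pm 1$ generates a pole-like peak $(1-\beta+\lambda)^{-1}$ near the origin which competes with the asymptotic value $1$ at infinity, and $\beta_{**}$ is precisely the value of $\beta$ at which the interior local maximum produced by this peak first reaches $1$, so that the supremum in \eqref{Kbeta2} is attained simultaneously at some finite $\lambda$ and at $\lambda=\infty$. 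This mirrors the behavior of $K_1$ near $\beta_*$, with the roles of ``small $\beta$'' and ``$\beta$ near $1$'' exchanged; the main (and only) obstacle is this numerical determination, which can in principle be rigorized by a direct analysis of $h_\beta'(\lambda)=0$ but is not sharper than the numerical value quoted.
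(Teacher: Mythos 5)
Your proposal matches the paper's argument essentially step for step: Green's function on the mean-zero subspace, the sharp-constant formula from Theorem~2.2 of~\cite{IZ} with $\theta=1/2$, the closed form from~\eqref{formsum} minus the $k=0$ term, the $\lambda\to\infty$ limit giving $K_2\ge 1$, the termwise monotonicity in $\beta$, and a numerical identification of $\beta_{**}$. The only small addition is your explicit Wirtinger-inequality remark on invertibility, which the paper leaves implicit; otherwise the two proofs coincide.
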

\begin{remark}\label{R:history}
{\rm
Inequality~\eqref{ineqsubtract} with $\beta=0$ and $K_2(0)=1$
goes back to~\cite{Hardy} where it was used for the proof of the
Carlson inequality. Sharp constants in the higher-order inequalities
were found in~\cite{Taikov} for $x\in\mathbb{R}$ and in \cite{I98JLMS}
for~$x\in\mathbb{S}^1$. Various refinements and improvements of this
type of inequalities were recently obtained in \cite{Zelik}, \cite{IMRN}, \cite{IZ}.
}\end{remark}
\begin{figure}[htb]
\centerline{\psfig{file=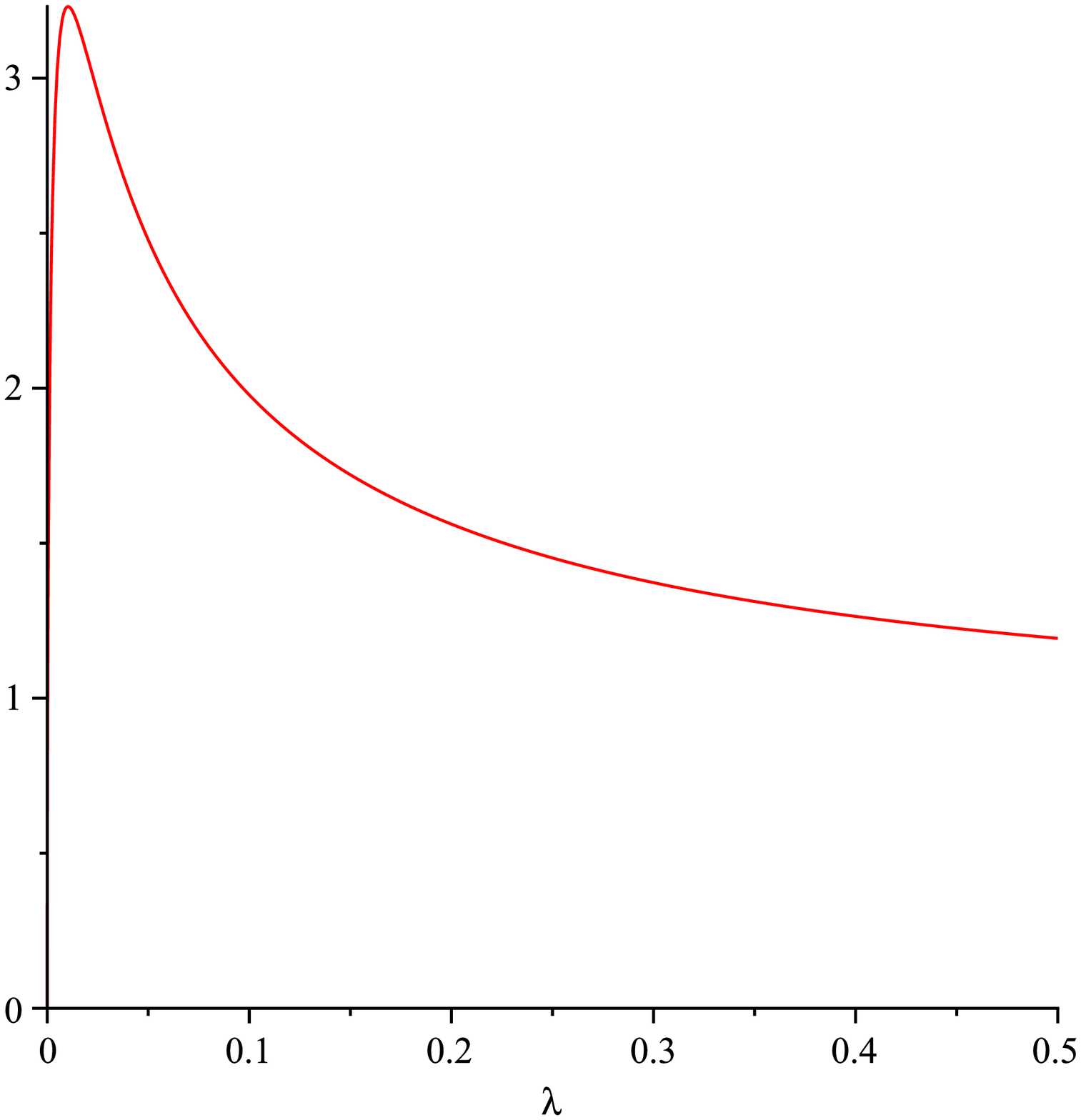,width=4cm,height=5cm,angle=0}
\psfig{file=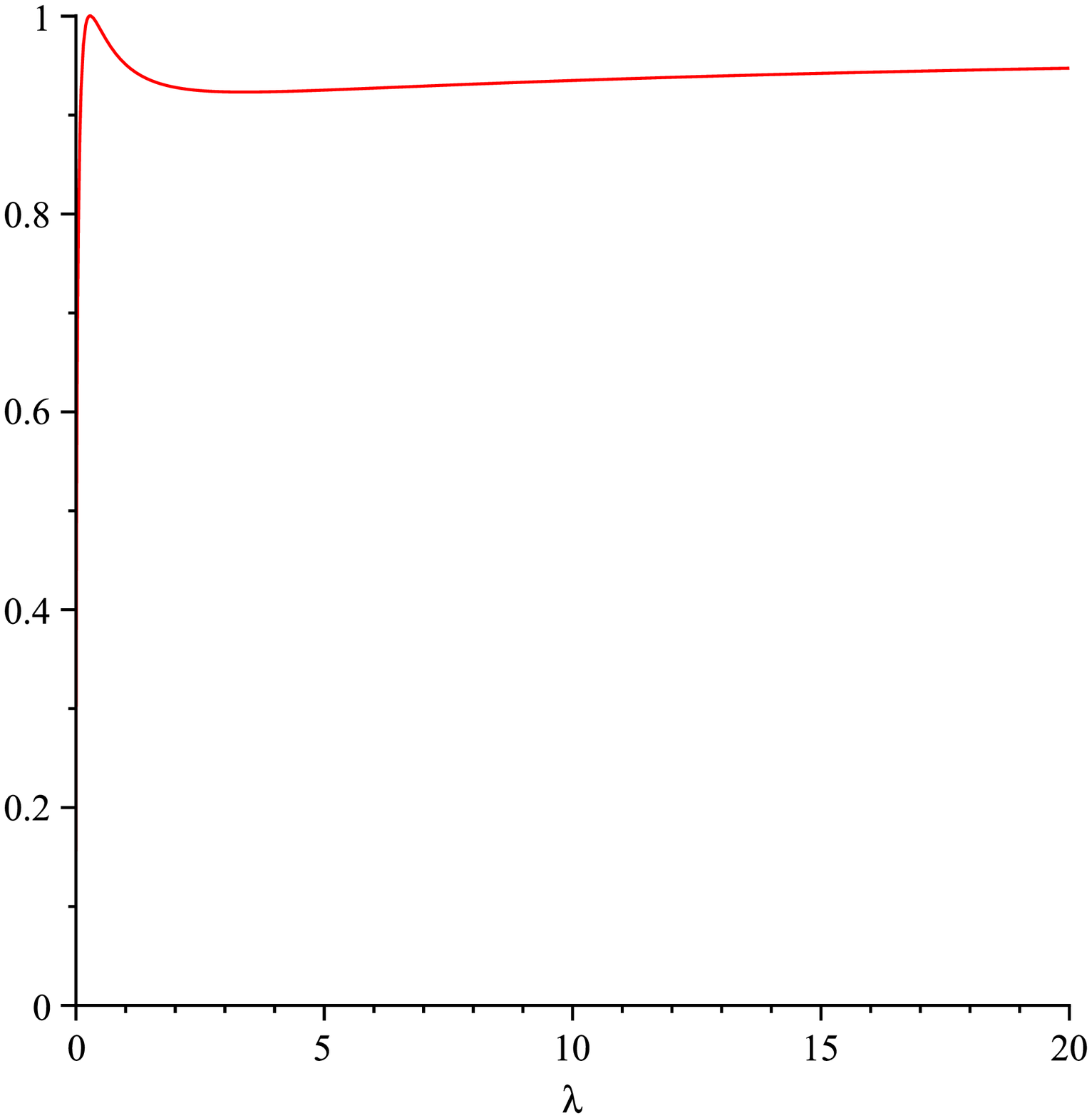,width=4cm,height=5cm,angle=0}
\psfig{file=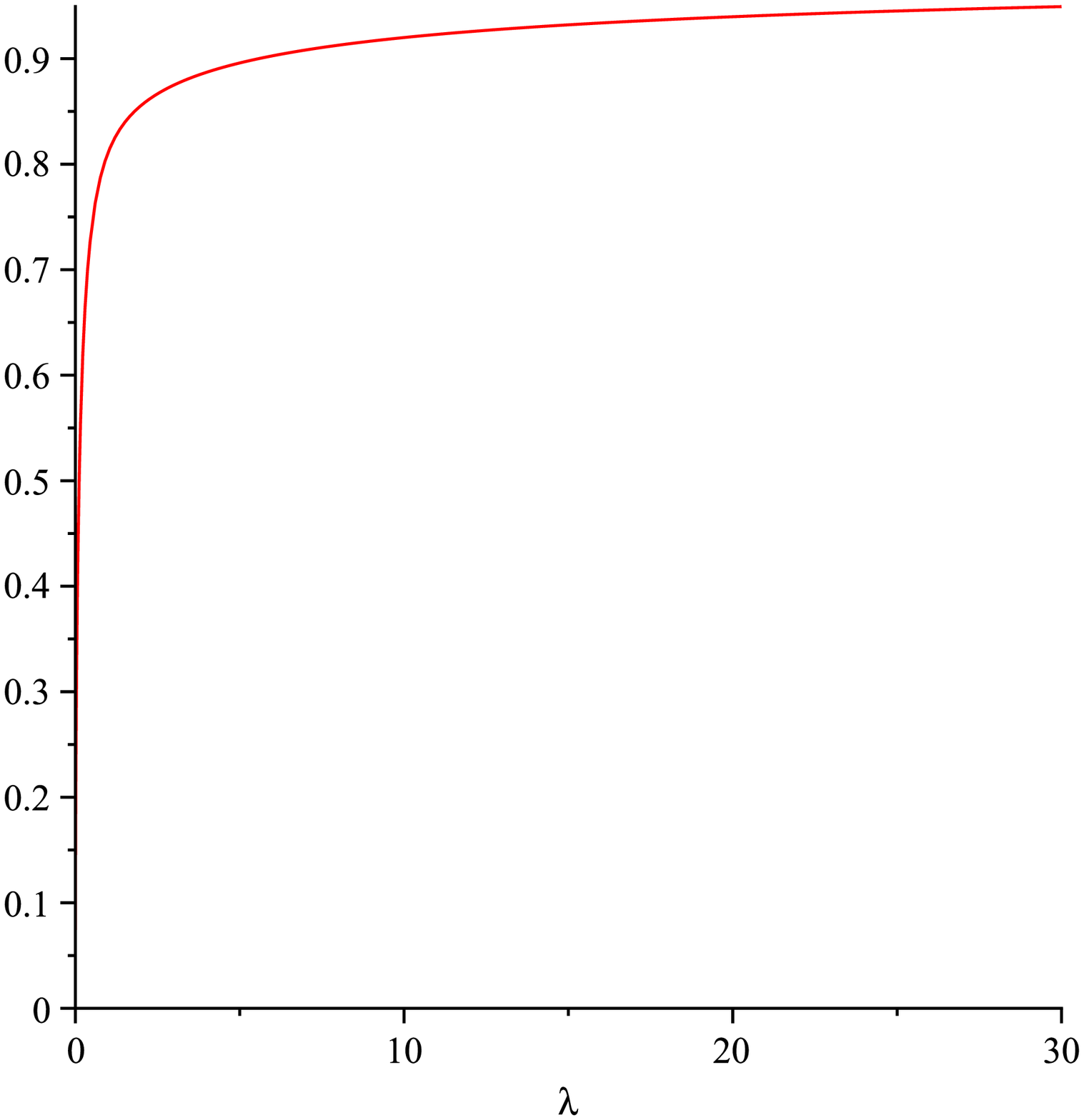,width=4cm,height=5cm,angle=0}
}
\caption{Graphs of $\lambda\to\sqrt{\lambda}\,
\frac{\sqrt{\lambda-\beta}\coth(\pi\sqrt{\lambda-\beta})-1/\pi}
{{\lambda-\beta}}$ for $\beta=0.99,\,\beta_{**},\,0.5$.}
\label{fig:K2}
\end{figure}

\begin{remark}\label{R:series}
{\rm
By the definition of $K_1(\beta)$ and $K_2(\beta)$ we have
\begin{equation}\label{bound}
\aligned
&\sum_{k\in\mathbb{Z}}
\frac1{k^2+\beta+\lambda}\le K_1(\beta)\frac\pi{\sqrt{\lambda}}\,,
\quad\beta>0,\\
&\sum_{k\in\mathbb{Z}_0}
\frac1{k^2-\beta+\lambda}\le K_2(\beta)\frac\pi{\sqrt{\lambda}}\,,
\quad\beta\in[0,1).
\endaligned
\end{equation}
}
\end{remark}

\setcounter{equation}{0}
\section{One-dimensional
 Sobolev inequalities for traces of matrices and associated
 Lieb--Thirring inequalities}\label{sec2}
Let $\{\phi_n\}_{n=1}^N$ be an orthonormal family of periodic
vector-functions
$$
\phi_n(x)=(\phi_n(x,1),\dots,\phi_n(x,M))^T
$$
defined for $x\in[0,2\pi/\alpha]_\mathrm{per}$:
$$
(\phi_n,\phi_m)
=
\sum_{j=1}^M\int_0^{L}\phi_n(x,j)\overline{\phi_m(x,j)}dx
=\int_0^{L} \phi_n(x)^T\overline{\phi_m(x)}dx=\delta_{nm},
$$
where we set for brevity
$$
L:=2\pi/\alpha.
$$

We consider the $M\times M$ matrix $U(x,y)$
\begin{equation}\label{matrixU}
U(x,y)=\sum_{n=1}^N\phi_n(x)\overline{\phi_n(y)}^T
\end{equation}
with entries
$[U(x,y)]_{jk}=\sum_{n=1}^N\phi_n(x,j)\overline{\phi_n(y,k)}$.
Clearly,
$$
U(x,y)^*=U(y,x)
$$
and by orthonormality
$$
\aligned
\int_0^{L} U(x,y)U(y,z)dy=\sum_{n,n'=1}^N\int_0^{L} \phi_n(x)
\overline{\phi_n(y)}^T\phi_{n'}(y)\overline{\phi_{n'}(z)}^Tdy=\\=
\sum_{n=1}^N \phi_n(x)
\overline{\phi_{n}(z)}^T=U(x,z).
\endaligned
$$

\begin{theorem}\label{T:matrix}
Let  $\phi_n(x,j)\in H^1(0,L)_{\mathrm{per}}$. Then
\begin{equation}\label{trace1}
\int_0^{L}\Tr[U(x,x)^{3}]dx\le K_1(\beta)^2\sum_{n=1}^N\sum_{j=1}^M
\int_0^{L}(|\phi'_n(x,j)|^2+\alpha^2\beta|\phi_n(x,j)|^2)dx,
\end{equation}
where $K_1(\beta)$ is defined in~\eqref{ineqadd} and $\beta>0$.

If $\alpha=1$, $L=2\pi$, and  $\int_0^{2\pi} \phi_n(x,j)=0$ for all $n$ and $j$, then
\begin{equation}\label{trace2}
\int_0^{2\pi}\Tr[U(x,x)^{3}]dx\le K_2(\beta)^2\sum_{n=1}^N\sum_{j=1}^M
\int_0^{2\pi}(|\phi'_n(x,j)|^2-\beta|\phi_n(x,j)|^2)dx,
\end{equation}
where $K_2(\beta)$ is defined in~\eqref{ineqsubtract} and $\beta\in[0,1)$.
\end{theorem}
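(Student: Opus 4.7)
The plan is to reduce the matrix-trace inequalities \eqref{trace1} and \eqref{trace2} to the scalar $L_\infty$--$L_2$ interpolation inequalities of Theorems~\ref{Th:K1} and~\ref{Th:K2}, applied componentwise to a carefully chosen $\mathbb{C}^M$-valued test function built from $U$, using the pointwise spectral decomposition of $U(x,x)$ as the bridge between the scalar and matrix settings.

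Fix $x \in (0, L)$ and diagonalize $U(x,x) = \sum_{k=1}^M \mu_k(x)\, \eta_k(x)\eta_k(x)^*$ with $\mu_k(x) \geq 0$ and orthonormal eigenvectors $\eta_k(x) \in \mathbb{C}^M$, so that $\Tr U(x,x)^3 = \sum_k \mu_k(x)^3$. For each $k$ introduce the $\mathbb{C}^M$-valued auxiliary function
$$R_k(y) := U(y,x)\,\eta_k(x) = \sum_{n=1}^N \phi_n(y)\bigl(\phi_n(x)^*\eta_k(x)\bigr),$$
which is componentwise $H^1$ on the period and, in the setting of \eqref{trace2}, has vanishing mean in each component since each $\phi_n(\cdot,j)$ does. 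Two identities are fundamental: the eigenvalue relation gives $R_k(x) = \mu_k(x)\eta_k(x)$, hence $|R_k(x)|^2 = \mu_k(x)^2$; and the projection property $\int_0^L U(z,y)U(y,z')\,dy = U(z,z')$ yields
$$\int_0^L |R_k(y)|^2\,dy = \eta_k(x)^* U(x,x)\eta_k(x) = \mu_k(x).$$

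Now apply \eqref{ineqadd} componentwise to each scalar $R_{k,j}$ and sum over $j = 1,\dots,M$ via Cauchy--Schwarz; the two identities above then yield
$$\mu_k(x)^2 \leq K_1(\beta)\,\sqrt{\mathcal{E}_\beta(R_k)}\,\sqrt{\mu_k(x)},\qquad \mathcal{E}_\beta(R_k) := \int_0^L\bigl(|R_k'|^2 + \alpha^2\beta|R_k|^2\bigr)dy,$$
hence $\mu_k(x)^3 \leq K_1(\beta)^2\,\mathcal{E}_\beta(R_k)$. Summing over $k$ and invoking that $\{\eta_k(x)\}$ is an orthonormal basis of $\mathbb{C}^M$, the sums $\sum_k|R_k(y)|^2 = \|U(y,x)\|_F^2$ and $\sum_k|R_k'(y)|^2 = \|\partial_y U(y,x)\|_F^2$ collapse to Frobenius norms, yielding the basis-free bound
$$\Tr U(x,x)^3 \le K_1(\beta)^2\int_0^L\bigl(\|\partial_y U(y,x)\|_F^2 + \alpha^2\beta\,\|U(y,x)\|_F^2\bigr)dy.$$

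It remains to integrate in $x$ and reduce the Frobenius norms by the orthonormality of $\{\phi_n\}$: expanding $\|U(y,x)\|_F^2 = \sum_{n,m}(\phi_m(x)^*\phi_n(x))(\phi_n(y)^*\phi_m(y))$, the off-diagonal $n\neq m$ terms are killed after integrating in $x$ by $\int_0^L \phi_m(x)^*\phi_n(x)\,dx = \delta_{mn}$, giving $\int_0^L\|U(y,x)\|_F^2\,dx = \sum_n \sum_j|\phi_n(y,j)|^2$, and analogously for $\partial_y U$; this produces precisely the right-hand side of \eqref{trace1}. Inequality \eqref{trace2} follows identically, with Theorem~\ref{Th:K2} and \eqref{ineqsubtract} replacing their counterparts, the mean-zero property of the components of $R_k$ being exactly what permits this substitution. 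The step I expect to be the crux is the engineering of $R_k := U(\cdot,x)\eta_k(x)$: it is designed so that the eigenvalue identity $R_k(x) = \mu_k\eta_k$ and the projection identity $\int|R_k|^2 = \mu_k$ together convert the scalar $L^\infty$ estimate into the cubic bound $\mu_k^3 \leq K^2\mathcal{E}_\beta(R_k)$, after which the eigenvector dependence evaporates into basis-invariant Frobenius norms perfectly matched to the orthonormality relations, so that measurability of $\eta_k(x)$ in $x$ is never actually needed.
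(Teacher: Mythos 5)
Your proof is correct, and it takes a genuinely different route from the paper. The paper's own proof (following Theorem~6.1 of~\cite{IMRN} and \cite{D-L-L}) works on the Fourier side of the $y$-variable: it applies the Cauchy--Schwarz inequality for matrix traces with an auxiliary positive matrix $\Lambda(x)$, invokes the operator-monotone bound $\sum_{k}[(k^2-\beta)I+\Lambda(x)^2]^{-1}< K_2(\beta)\pi\Lambda(x)^{-1}$ obtained by diagonalizing $\Lambda$, then chooses $\Lambda(x)=\gamma(U(x,x)+\varepsilon I)$ and optimizes over $\gamma$. You instead diagonalize $U(x,x)$ pointwise, feed its eigenvectors through $U(\cdot,x)$ to form $R_k(y)=U(y,x)\eta_k(x)$, and exploit the two algebraic identities $R_k(x)=\mu_k(x)\eta_k(x)$ and $\int_0^L|R_k|^2\,dy=\mu_k(x)$ to convert the scalar interpolation inequalities~\eqref{ineqadd}--\eqref{ineqsubtract} directly into $\mu_k(x)^3\le K^2\mathcal{E}(R_k)$; the basis dependence then collapses into Frobenius norms, which the orthonormality of $\{\phi_n\}$ reduces to the right-hand sides of~\eqref{trace1}--\eqref{trace2}. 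Both arguments actually pass through the same intermediate estimate $\Tr U(x,x)^3\le K^2\int_0^L\bigl(\|\partial_y U(y,x)\|_F^2\pm\beta\|U(y,x)\|_F^2\bigr)dy$ and diverge only in how they reach it. Yours is essentially the scalar Eden--Foias argument that the authors mention in Remark~\ref{R:E-F} for $M=1$, upgraded to matrices via the pointwise spectral decomposition of $U(x,x)$ --- a more elementary and self-contained derivation that avoids the auxiliary-matrix and $\gamma$-optimization machinery; the paper's route is the standard template from the matrix Lieb--Thirring literature, and its Fourier-multiplier formulation is the one that plugs most directly into the Birman--Schwinger and Aizenman--Lieb steps that follow. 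Two small points worth making explicit if you wrote this out in full: the scalar interpolation inequalities are stated for real $u$ but extend to complex $u$ by the usual phase-rotation argument, and in the subtracted case the nonnegativity of each $\int(|R_{k,j}'|^2-\beta|R_{k,j}|^2)\,dy$ (which you need for the Cauchy--Schwarz step over $j$) follows from the mean-zero Poincar\'e inequality together with $\beta<1$.
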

\begin{proof}
Both inequalities are proved similarly. The argument, in turn, follows very closely
the proof of Theorem~6.1 in~\cite{IMRN}.
Let us prove, for example, the second inequality~\eqref{trace2}.
We have
$$
\tilde U(n,x)  = \frac1{\sqrt{2\pi}}\int_0^{2\pi}   e^{- i  y n} U(y,x) dy,\quad n\in \mathbb{Z}_0,
$$
so that
$$
U(y,x) =\frac1{\sqrt{2\pi}}\sum_{k\in\mathbb{Z}_0}  {e^{ i  y k}}  \tilde U(k,x).
$$
Next,
\begin{equation}\label{orthint}
\sum_{k\in\mathbb{Z}_0} \tilde U(k,x)^* \tilde U(k,x) =
\int_0^{2\pi}  U(y,x)^* U(y,x) dy  = U(x,x),
\end{equation}
 and
$$
\aligned
\sum_{k\in\mathbb{Z}_0} (k^2-\beta)
\tilde U(k,x)^* \tilde U(k,x) =
\int_0^{2\pi} [\partial_y U (y,x)]^* \partial_y U(y,x)  dy-\\
-\beta\int_0^L U (y,x)^* U(y,x)dy,
\endaligned
$$
where the last term is equal to $-\beta U (x,x)$.
Now, by orthonormality
\begin{equation}\label{Dtrace}
\allowdisplaybreaks
\aligned
&\Tr\left[\int_0^{2\pi} \sum_{k\in \mathbb{Z}_0} (k^2-\beta)
\tilde U(k,x)^* \tilde U(k,x)  dx \right]=\\
&\Tr\left[\int_0^{2\pi} \int_0^{2\pi} \sum_{n,n'=1}^{N}
\phi_{n'}'(y)\overline{\phi_{n'}(x)}^T
\phi_n(x)\overline{\phi_n'(y)}^T
 dx dy-\beta\int_0^{2\pi}U(x,x)dx\right]=\\
&\Tr\left[ \int_0^{2\pi} \sum_{n=1}^{N}\left[
\phi_{n}'(y)\overline{\phi_n'(y)}^T-\beta\phi_{n}(y)\overline{\phi_n(y)}^T\right] dy\right]=
\\
&\qquad=\sum_{n=1}^N\sum_{j=1}^M
\int_0^{2\pi}(|\phi'_n(x,j)|^2-\beta|\phi_n(x,j)|^2)dx.
\endaligned
\end{equation}

Now consider
$$
\aligned
&\Tr[U(x,x)^{3}] = \sum_{k\in\mathbb{Z}_0}
\Tr[U(x,x)^{2} \tilde U(k,x) ]  \frac {e^{ i  x k}}{\sqrt{2\pi}}=\\
&\sum_{k\in\mathbb{Z}_0}
\Tr\biggl [ [(k^2-\beta) I + \Lambda(x)^{2}]^{-1/2} U(x,x)^{2}
\tilde U(k,x)
[(k^2-\beta) I + \Lambda(x)^{2}]^{1/2}\biggr ]
\frac{e^{ i  x k}}{\sqrt{2\pi}}\,,
\endaligned
$$
where $\Lambda(x)$ is an arbitrary positive definite matrix. Using
 below the  Cauchy--Schwartz inequality for
matrices  we get the upper bounds
\begingroup
\allowdisplaybreaks
\begin{align*}
&\sqrt{2\pi}\Tr[U(x,x)^{3}] \le\\
&\sum_{k\in\mathbb{Z}_0}
\bigl|\Tr\bigl[ [(k^2-\beta) I + \Lambda(x)^{2}]^{-1/2} U(x,x)^{2}
\tilde U(k,x)[(k^2-\beta) I + \Lambda(x)^{2}]^{1/2}\bigl ]
\bigl|\le\\
 &\sum_{k\in\mathbb{Z}_0}
\bigl(\Tr\bigl [ U(x,x)^{2}[(k^2-\beta) I + \Lambda(x)^{2}]^{-1}
U(x,x)^{2}\bigr]\bigr)^{1/2}\times\\
&\bigl(\Tr\bigl[
[(k^2-\beta) I + \Lambda(x)^{2}]
\tilde U(k,x)^*\tilde U(k,x)\bigr]\bigr)^{1/2}\le\\
 &\left(\sum_{k\in\mathbb{Z}_0}
\Tr\left [ U(x,x)^{2}[(k^2-\beta) I + \Lambda(x)^{2}]^{-1}
U(x,x)^{2}\right]\right)^{1/2}\times\\
&\left(\sum_{k\in\mathbb{Z}_0}\Tr\left[
[(k^2-\beta) I + \Lambda(x)^{2}]
\tilde U(k,x)^*\tilde U(k,x)\right]\right)^{1/2}\,.
\end{align*}
\endgroup

We now  use the  matrix inequality
\begin{equation}\label{matrineq}
\sum_{k\in\mathbb{Z}_0}[(k^2-\beta) I + \Lambda(x)^{2}]^{-1}<
K_2(\beta)\pi\Lambda(x)^{-1},
\end{equation}
which follows from the scalar inequality (see~\eqref{bound})
$$
\sum_{k\in\mathbb{Z}_0}\frac1{k^2-\beta  + \lambda^{2}}<
K_2(\beta)\pi\lambda^{-1}
$$
applied to each eigenvector
$e=e(x)$ of $\Lambda(x)$ with eigenvalue
$\lambda=\lambda(x)>0$ from the orthonormal basis
$\{e_j(x),\ \lambda_j(x)\}_{j=1}^M$.
This gives for  the first factor
$$
\allowdisplaybreaks
\aligned
\sum_{k\in\mathbb{Z}_0}
\Tr\left [ U(x,x)^{2}[(k^2-\beta) I + \Lambda(x)^{2}]^{-1}
U(x,x)^{2}\right]=\\
\Tr\left [ U(x,x)^{2}\sum_{k\in\mathbb{Z}_0}[(k^2-\beta) I + \Lambda(x)^{2}]^{-1}
U(x,x)^{2}\right]\le\\
K_2(\beta)\pi\Tr\left [ U(x,x)^{2}\Lambda(x)^{-1}
U(x,x)^{2}\right].
\endaligned
$$

For the second factor we simply have
$$
\aligned
\sum_{k\in\mathbb{Z}_0}\Tr\left[
[(k^2-\beta) I + \Lambda(x)^{2}]
\tilde U(k,x)^*\tilde U(k,x)\right]=\\
\sum_{k\in\mathbb{Z}_0}\Tr
\left[(k^2-\beta) \tilde U(k,x)^*\tilde U(k,x)\right] +
\sum_{k\in\mathbb{Z}_0}\Tr\left[\Lambda(x)^{2}
\tilde U(k,x)^*\tilde U(k,x)\right].
\endaligned
$$

If we now chose $\Lambda(x)=\gamma(U(x,x)+\varepsilon I)$,
$\gamma>0$,  and let $\varepsilon\to0$
we obtain (observing that $\lambda^{4}/(\lambda+\varepsilon)\to
\lambda^{3}$ as $\varepsilon\to0$ for $\lambda\ge0$; this is required in case when $U(x,x)$
is not invertible)
$$
\aligned
\sqrt{2\pi}\Tr[U(x,x)^{3}] \le\sqrt{\pi{K_2(\beta)}}\,
\gamma^{-1/2}\Tr[U(x,x)^{3}]^{1/2}\times\\
\left(
\sum_{k\in\mathbb{Z}_0}\Tr
\left[(k^2-\beta) \tilde U(k,x)^*\tilde U(k,x)\right]
+\gamma^{2}\Tr[U(x,x)^{3}]
\right),
\endaligned
$$
where we have also used~\eqref{orthint}, or
$$
\aligned
&\Tr[U(x,x)^{3}] \le\\
\frac{K_2(\beta)}2
\biggl(\gamma^{-1}
\sum_{k\in\mathbb{Z}_0}
&\Tr
\left[(k^2-\beta) \tilde U(k,x)^*
\tilde U(k,x)\right]
+\gamma\Tr[U(x,x)^{3}]
\biggr).
\endaligned
$$
If we optimize over $\gamma$, we obtain
$$
\aligned
\Tr[U(x,x)^{3}]\! \le\!
K_2(\beta)\biggl(\Tr[U(x,x)^{3}]\biggr)^{1/2}\!
\biggl(
\sum_{k\in\mathbb{Z}_0}
\Tr
\left[(k^2-\beta) \tilde U(k,x)^*
\tilde U(k,x)\right]
\biggr)^{1/2}
\endaligned
$$
or
$$
\Tr[U(x,x)^{3}] \le
K_2(\beta)^2
\sum_{k\in\mathbb{Z}_0}
\Tr
\left[(k^2-\beta) \tilde U(k,x)^*
\tilde U(k,x)\right].
$$
If we integrate with respect to $x$ and use~\eqref{Dtrace},
 we obtain~\eqref{trace2}.
 The proof of the theorem is complete since the proof of ~\eqref{trace1}
 is totally similar.
\end{proof}

\begin{remark}\label{R:E-F}
{\rm
In the scalar case $M=1$ inequalities \eqref{trace1}, \eqref{trace2} go over to
\begin{equation}\label{EF}
\aligned
&\int_0^{L}\biggl(\sum_{n=1}^N|\phi_n(x)|^2\biggr)^3dx\le
K_1(\beta)^2\sum_{n=1}^N \int_0^{L}
(|\phi_n'(x)|^2+\alpha^2\beta|\phi_n(x)|^2)dx,\\
&\int_0^{2\pi}\biggl(\sum_{n=1}^N|\phi_n(x)|^2\biggr)^3dx\le
K_2(\beta)^2\sum_{n=1}^N \int_0^{2\pi}
(|\phi_n'(x)|^2-\beta|\phi_n(x)|^2)dx
\endaligned
\end{equation}
and follow from  interpolation inequalities \eqref{ineqadd},\,\eqref{ineqsubtract}
by the method  of~\cite{E-F}.
}
\end{remark}

Let us now consider two one-dimensional Schr\"odinger operators
with periodic boundary conditions and matrix-valued
potentials~\eqref{H1} and \eqref{H2}, where $\mathrm{P}$ is the
orthogonal projection
\begin{equation}\label{P}
(\mathrm{P}\psi)(x)=\psi(x)-\frac1{2\pi}\int_0^{2\pi}\psi(t)dt
\end{equation}
acting component-wise.

Inequalities \eqref{trace1} and \eqref{trace2} in Theorem~\ref{T:matrix} are
 equivalent to the estimate of the $1$-moments of
the negative eigenvalues $-\lambda_j\le0$ and $-\mu_j\le0$ of the Schr\"odinger  operators
\eqref{H1} and \eqref{H2}, respectively.
\begin{theorem}\label{Th:matrS1}
Let $V$ be a non-negative $M\times M$ Hermitian matrix such that $\Tr
V^{3/2} \in L_1$. Then the operators~\eqref{H1} and \eqref{H2} have
discrete spectrum, and their  negative eigenvalues $\{-\lambda_j\}$ and $\{-\mu_j\}$
 satisfy the estimates~\eqref{neg1} and \eqref{neg2}, respectively.
\end{theorem}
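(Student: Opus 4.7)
The plan is to derive both moment bounds from the matrix Sobolev inequalities of Theorem~\ref{T:matrix} via the standard duality argument. I focus on \eqref{neg1}; the proof of \eqref{neg2} is structurally identical, with \eqref{trace2} in place of \eqref{trace1} and $H_2(\delta)$ in place of $H_1(\beta)$---the projection $\mathrm{P}$ in \eqref{H2} automatically forces eigenvectors into the mean-zero subspace required by the second part of Theorem~\ref{T:matrix}.

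Discreteness of the negative spectrum of $H_1(\beta)$ is immediate on a compact interval with periodic boundary conditions: the unperturbed operator has compact resolvent, and $\Tr V^{3/2}\in L_1$ yields form-compactness of the perturbation. Let $\{\phi_n\}_{n=1}^N$ be an $L_2$-orthonormal system of vector-valued eigenfunctions corresponding to the negative eigenvalues $-\lambda_1,\dots,-\lambda_N$. Testing $H_1(\beta)\phi_n=-\lambda_n\phi_n$ against $\phi_n$, summing, and recognizing that $\sum_n\langle V\phi_n,\phi_n\rangle=\int_0^L\Tr[V(x)U(x,x)]\,dx$ with $U$ from \eqref{matrixU}, one obtains
\begin{equation*}
\sum_{n=1}^N\lambda_n=\int_0^L\Tr[V(x)U(x,x)]\,dx - E, \qquad E:=\sum_{n=1}^N\sum_{j=1}^M\int_0^L\bigl(|\phi_n'(x,j)|^2+\alpha^2\beta|\phi_n(x,j)|^2\bigr)dx.
\end{equation*}

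Next I would apply the trace Hölder inequality for non-negative Hermitian matrices with exponents $(3/2,3)$,
\begin{equation*}
\Tr[V(x)U(x,x)]\le\bigl(\Tr V(x)^{3/2}\bigr)^{2/3}\bigl(\Tr U(x,x)^3\bigr)^{1/3},
\end{equation*}
followed by Hölder's inequality in $x$, to get
\begin{equation*}
\int_0^L\Tr[VU]\,dx\le A^{2/3}\left(\int_0^L\Tr[U(x,x)^3]\,dx\right)^{1/3}, \qquad A:=\int_0^L\Tr V(x)^{3/2}\,dx.
\end{equation*}
Estimate \eqref{trace1} bounds the last factor by $(K_1(\beta)^2E)^{1/3}$, so
\begin{equation*}
\sum_n\lambda_n\le A^{2/3}K_1(\beta)^{2/3}E^{1/3} - E.
\end{equation*}
Optimizing $f(E)=cE^{1/3}-E$ with $c=A^{2/3}K_1(\beta)^{2/3}$ over $E\ge 0$ yields $\max f = \tfrac{2}{3\sqrt{3}}c^{3/2}=\tfrac{2}{3\sqrt{3}}K_1(\beta)A$, which is exactly \eqref{neg1}.

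There is no real obstacle once Theorem~\ref{T:matrix} is in hand; the only point requiring care is the matching of Hölder exponents, so that the resulting $E^{1/3}$ combines with the linear term $-E$ to produce a finite supremum with precisely the semi-classical factor $2/(3\sqrt{3})$. This matching is forced by the $L_3$-structure on the left of \eqref{trace1}--\eqref{trace2}, which in turn is tailored to $1$-moments via the equivalence \eqref{k=L}.
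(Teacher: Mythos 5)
Your proof is correct and follows essentially the same route as the paper: test the eigenvalue equation against the eigenfunctions, apply the trace H\"older inequality to bound $\int\Tr[VU]$, invoke the matrix Sobolev inequality from Theorem~\ref{T:matrix}, and optimize a function of the form $cT^{1/3}-T$ to produce the factor $\tfrac{2}{3\sqrt{3}}$. The only cosmetic difference is that you optimize over the kinetic energy $E$ after substituting $X\le K_1(\beta)^2E$, whereas the paper substitutes $E\ge K_2(\delta)^{-2}X$ and optimizes over $X=\int\Tr[U^3]\,dx$; the two computations are equivalent.
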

\begin{proof} Once we have the inequalities for the traces, the proof is completely analogous
to the proof of Theorem~1 in \cite{D-L-L} (see also Theorem~6.3 in \cite{IMRN}.), and the assertion of the theorem
is just the inequality $\mathrm{L}_{1,1}\le\frac2{3\sqrt{3}}\sqrt{\mathrm{k}_1}$ in the one dimensional
matrix case, see~\eqref{k=L}. We include the proof of~\eqref{neg2} for the sake of completeness
(the proof of \eqref{neg1} is completely similar).

 Let $\{\phi_n\}_{n=1}^N$
be the orthonormal eigen-vector functions corresponding
to the negative eigenvalues $\{-\mu_n\}_{n=1}^N$ of~\eqref{H2} :
$$
-\phi''_n-\delta\phi_n-V\phi_n=-\mu_n\phi_n.
$$
Using~\eqref{trace2} and H\"older's inequality for
 traces  we obtain
$$
\aligned
&\sum_{n=1}^N\mu_n=\\
&-\sum_{n=1}^N \sum_{j=1}^M  \int_0^{2\pi} (|\phi'_n(x,j)|^2-\delta|\phi_n(x,j)|^2) dx
+\int_0^{2\pi}\Tr [V(x)U(x,x)]dx\le\\
&\le-K_2(\delta)^{-2}X+
\left(\int_0^{2\pi}\Tr[V(x)^{3/2}]dx\right)^{2/3}
X^{1/3},
\endaligned
$$
where  $X:=\int_0^{2\pi}\Tr [U(x,x)^3]dx$.
Calculating the maximum with respect to $X$ we
obtain~\eqref{neg2}.
\end{proof}

 We observe that in terms of~\eqref{class} the estimates \eqref{neg1} and \eqref{neg2} can be written in the form
\begin{align}
\sum_j\lambda_j\le\frac \pi{\sqrt{3}}K_1(\beta)\mathrm{L}^\mathrm{cl}_{1,1}
\int_0^{L}  \Tr[V(x)^{3/2}]dx,\label{neg1cl}\\
\sum_j\mu_j\le\frac \pi{\sqrt{3}}K_2(\beta)\mathrm{L}^\mathrm{cl}_{1,1}
\int_0^{2\pi}  \Tr[V(x)^{3/2}]dx.\label{neg2cl}
\end{align}

By using the Aizenmann-Lieb argument \cite{AisLieb} we
immediately obtain the following estimates for the Riesz means of order $\gamma\ge1$
of the eigenvalues of the operators~\eqref{H1} and \eqref{H2}.
\begin{corollary}\label{1D gamma-moments}
Let $V\ge0$ be a  $M\times M$ Hermitian matrix, such that
$\Tr V^{\gamma + 1/2} \in L_1$.
Then for any $\gamma \ge 1$  the negative eigenvalues of
the operators \eqref{H1}, \eqref{H2} satisfy the inequalities
\begin{align}
\sum_j\lambda_j^\gamma\le\frac \pi{\sqrt{3}}K_1(\beta)\mathrm{L}^\mathrm{cl}_{\gamma,1}
\int_0^{L}  \Tr[V(x)^{1/2+\gamma}]dx,\label{neg1AZ}\\
\sum_j\mu_j^\gamma\le\frac \pi{\sqrt{3}}K_2(\beta)\mathrm{L}^\mathrm{cl}_{\gamma,1}
\int_0^{2\pi}  \Tr[V(x)^{1/2+\gamma}]dx.\label{neg2AZ}
\end{align}
\end{corollary}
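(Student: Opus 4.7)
The plan is the standard Aizenmann--Lieb lifting, adapted to the matrix setting and to both operators \eqref{H1}, \eqref{H2} in parallel. I will treat \eqref{neg2AZ} in detail; the argument for \eqref{neg1AZ} is identical with $K_2(\delta)$ replaced by $K_1(\beta)$, $2\pi$ by $L$, and the projection $\mathrm{P}$ omitted.

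First, the case $\gamma=1$ is exactly \eqref{neg2cl}, so I may assume $\gamma>1$. The starting identity is
\begin{equation*}
t^\gamma \;=\; \gamma(\gamma-1)\int_0^\infty \tau^{\gamma-2}(t-\tau)_+\,d\tau,\qquad t\ge 0,
\end{equation*}
which one verifies by the substitution $\tau=ts$. Applied to each $\mu_j$ and summed, this gives
\begin{equation*}
\sum_j \mu_j^\gamma \;=\; \gamma(\gamma-1)\int_0^\infty \tau^{\gamma-2}\Bigl(\sum_j(\mu_j-\tau)_+\Bigr)\,d\tau,
\end{equation*}
after an application of Tonelli which is justified by the assumption $\operatorname{Tr} V^{\gamma+1/2}\in L^1$.

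Second, I must recognise the inner sum as a $1$-moment to which Theorem~\ref{Th:matrS1} applies. The quantities $(\mu_j-\tau)_+$ are the absolute values of the negative eigenvalues of the shifted operator $H_2(\delta)+\tau$. On $\dot L_2(0,2\pi)$ one has $\mathrm{P}\psi=\psi$, hence $\mathrm{P}(\tau I)\mathrm{P}=\tau\,\mathrm{P}$, and therefore
\begin{equation*}
H_2(\delta)+\tau \;=\; -\tfrac{d^2}{dx^2}-\delta-\mathrm{P}\bigl(V(x)-\tau I\bigr)\mathrm{P}
\;\ge\; -\tfrac{d^2}{dx^2}-\delta-\mathrm{P}\bigl(V(x)-\tau I\bigr)_+\mathrm{P},
\end{equation*}
since $V-\tau I\le(V-\tau I)_+$ in the matrix order. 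By the min--max principle the negative eigenvalues of the larger operator are dominated (in absolute value) by those of the smaller, so by \eqref{neg2} applied to the nonnegative matrix potential $(V(x)-\tau I)_+$,
\begin{equation*}
\sum_j(\mu_j-\tau)_+ \;\le\; \frac{2}{3\sqrt 3}K_2(\delta)\int_0^{2\pi}\operatorname{Tr}\bigl[(V(x)-\tau I)_+^{3/2}\bigr]dx.
\end{equation*}

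Third, I insert this into the double integral and swap the order of integration. For each fixed $x$, if $v_1(x),\dots,v_M(x)\ge 0$ are the eigenvalues of $V(x)$, the eigenvalue calculus gives $\operatorname{Tr}[(V-\tau I)_+^{3/2}]=\sum_i (v_i(x)-\tau)_+^{3/2}$, and
\begin{equation*}
\int_0^\infty \tau^{\gamma-2}(v_i-\tau)_+^{3/2}\,d\tau \;=\; v_i^{\gamma+1/2}\,B(\gamma-1,\,5/2).
\end{equation*}
Using $\gamma(\gamma-1)\Gamma(\gamma-1)=\Gamma(\gamma+1)$ and $\Gamma(5/2)=\tfrac{3\sqrt\pi}{4}$, one finds
\begin{equation*}
\gamma(\gamma-1)\cdot\frac{2}{3\sqrt 3}\cdot B(\gamma-1,5/2)
\;=\;\frac{\sqrt{\pi}\,\Gamma(\gamma+1)}{2\sqrt 3\,\Gamma(\gamma+3/2)}
\;=\;\frac{\pi}{\sqrt 3}\,\mathrm{L}^{\mathrm{cl}}_{\gamma,1},
\end{equation*}
by \eqref{class}. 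Assembling the constants yields \eqref{neg2AZ}. The proof of \eqref{neg1AZ} is identical, with the operator $H_1(\beta)$ acting on the full $L_2(0,L)_{\mathrm{per}}$, where no projection appears and the shift $-d^2/dx^2+\alpha^2\beta+\tau$ is again covered by \eqref{neg1} with potential $(V-\tau I)_+$.

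The only real point of care is the operator inequality after the shift: one must justify replacing $\mathrm{P}(V-\tau I)\mathrm{P}$ by $\mathrm{P}(V-\tau I)_+\mathrm{P}$ inside $H_2$. This is clean because $\mathrm{P}$ is a self-adjoint projection, so $A\mapsto \mathrm{P}A\mathrm{P}$ preserves the matrix order, but it is the step that could silently go wrong in the matrix--valued case; everything else is a bookkeeping exercise in Beta-function identities.
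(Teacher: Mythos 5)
Your proposal is correct and is essentially the same Aizenman--Lieb lifting argument the paper uses, so I will just flag the small differences. The paper rewrites the $1$-moment bound of Theorem~\ref{Th:matrS1} in semiclassical form as $\frac{1}{2\pi}\int\int \Tr[(|\xi|^2-V)_-]\,d\xi\,dx$ and then inserts the scaling identity inside that double integral, recovering $\mathrm{L}^{\mathrm{cl}}_{\gamma,1}$ directly from the definition; you skip the $\xi$-integral, apply the shift-and-truncate argument at the operator level, and reassemble $\mathrm{L}^{\mathrm{cl}}_{\gamma,1}$ by an explicit Beta-function calculation. These are interchangeable bookkeeping choices. What you add that the paper leaves implicit (it only writes out the $H_1$ case and says $H_2$ is similar) is the verification that the shift commutes with the projection on $\dot L_2$, i.e.\ $\mathrm{P}(\tau I)\mathrm{P}=\tau I$ there, and that replacing $\mathrm{P}(V-\tau I)\mathrm{P}$ by $\mathrm{P}(V-\tau I)_+\mathrm{P}$ is monotone because $A\mapsto\mathrm{P}A\mathrm{P}$ preserves matrix order; this is exactly the point worth making explicit in the projected, matrix-valued setting, and you got it right. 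Your reduction of $\gamma=1$ to \eqref{neg2cl} and the Beta identities $B(\gamma-1,5/2)=\Gamma(\gamma-1)\Gamma(5/2)/\Gamma(\gamma+3/2)$, $\gamma(\gamma-1)\Gamma(\gamma-1)=\Gamma(\gamma+1)$ all check out, giving $\frac{\sqrt\pi}{2\sqrt 3}\frac{\Gamma(\gamma+1)}{\Gamma(\gamma+3/2)}=\frac{\pi}{\sqrt 3}\mathrm{L}^{\mathrm{cl}}_{\gamma,1}$ as claimed.
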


\begin{proof}It is enough to prove this result for smooth matrix-valued potentials.
Furthermore, we consider only \eqref{neg1AZ}, the treatment of the second operator
being completely similar.
Note that Theorem \ref{Th:matrS1} is equivalent to
$$
\sum_n\lambda_n\le\frac 2{3\sqrt{3}}K_1(\beta)
\frac1{\mathrm{L}_{1,1}^{\mathrm{cl}}}  \,\int_0^{L}
\int_{-\infty}^\infty \Tr\left[\left(|\xi|^2 - V(x)\right)_-\right]\, \frac{d\xi dx}{2\pi}.
$$
Scaling gives the simple identity for all $s\in \mathbb R$
$$
s_-^\gamma = C_\gamma \, \int_0^\infty t^{\gamma-2} (s+t)_- dt ,
 \qquad C_\gamma^{-1} = \mathcal B(\gamma - 1, 2),
$$
where $\mathcal B$ is the Beta function. Let
$\{\nu_j(x)\}_{j=1}^M $ be the eigenvalues of the matrix-function $V(x)$.
Then
 \begin{align*}
& \sum_n \lambda_n^\gamma = C_\gamma \,
\sum_n \int_0^\infty t^{\gamma-2} (-\lambda_n + t)_- dt\\
& \le \frac{2K_1(\beta)}{3\sqrt{3}} \, \frac{C_\gamma}{\mathrm{L}_{1,1}^{\mathrm{cl}}} \,
\int_0^\infty \int_0^{L} \int_{-\infty}^\infty  t^{\gamma-2}
\Tr\left[\left(|\xi|^2 - V(x) + t\right)_-\right]\,\frac{d\xi dx}{2\pi} \,  dt \\
& = \frac{2K_1(\beta)}{3\sqrt{3}}\, \frac{C_\gamma}{\mathrm{L}_{1,1}^{\mathrm{cl}}} \,
\sum_{j=1}^M\, \int_0^\infty \int_0^{L} \int_{-\infty}^\infty  t^{\gamma-2}
 \Tr\left[\left(|\xi|^2 - \nu_j(x) + t\right)_-\right]\,\frac{d\xi dx}{2\pi} \,  dt\\
&= \frac{2K_1(\beta)}{3\sqrt{3}}\frac1{\mathrm{L}_{1,1}^{\mathrm{cl}}} \, \int_0^{L}
\int_{-\infty}^\infty   \Tr \left[\left(|\xi|^2 - V(x)\right)_-^\gamma\right]\,
\frac{d\xi dx}{2\pi} \\
& = \frac {2K_1(\beta)}{3\sqrt{3}}
\frac{\mathrm{L}_{\gamma,1}^{\mathrm{cl}}}{\mathrm{L}_{1,1}^{\mathrm{cl}}}\,
\int_0^{L}  \Tr[V(x)^{1/2 + \gamma}] \, dx,
\end{align*}
which completes the proof since $\mathrm{L}_{1,1}^{\mathrm{cl}}=2/(3\pi)$.
\end{proof}

\setcounter{equation}{0}
\section{Lieb--Thirring inequalities on the torus}\label{sec3}

In this section we consider Lieb--Thirring inequalities on the torus
\eqref{T2a}, \eqref{alpha},
paying special attention to the two-dimensional case.

We recall the orthogonal  projection $\mathrm{P}$ defined in~\eqref{projP1},
so that the resulting function  has zero average  with respect to shortest
coordinate  $x_d$ for all $x_1,\dots,x_{d-1}$.

\begin{theorem}\label{T:ddim}
Let $V\ge0$, $\gamma\ge1$,  and let $V\in L_{\gamma+d/2}(\mathbb{T}^d_\alpha)$.
Then the negative eigenvalues $-\lambda_n\le0$ of the  the Schr\"odinger operator
\begin{equation}\label{Schrod}
\mathcal{H}=-\Delta-\mathrm{P}(V(x)\mathrm{P}\,\cdot)
\end{equation}
 satisfy the bound
\begin{equation}\label{LTdest}
\sum_n\lambda_n^\gamma\le \mathrm{L}_{\gamma,d}\int_{\mathbb{T}^d_\alpha}V^{\gamma+d/2}(x)dx
\end{equation}
where
\begin{equation}\label{LTdconst}
\mathrm{L}_{\gamma,d}\le\left(\frac\pi{\sqrt{3}}\right)^d\,\prod_{j=1}^{d-1}
K_1(\beta_j)\,K_2(\delta)
\mathrm{L}^\mathrm{cl}_{\gamma,d},
\end{equation}
provided that
$\beta_j>0$, $j=1,\dots d-1$ are chosen so small
that
$$
\delta:=\alpha_1^2\beta_1+\dots+\alpha_{d-1}^2\beta_{d-1}<1.
$$
If
\begin{equation}\label{betadelta}
\beta_j\ge\beta_*\quad\&\quad\delta\le\beta_{**},
\end{equation}
(where the numbers $\beta_*$ and $\beta_{**}$ are defined in
Section~\ref{sec1}), then
\begin{equation}\label{LTdconst1}
\mathrm{L}_{\gamma,d}\le\left(\frac\pi{\sqrt{3}}\right)^d\,
\mathrm{L}^\mathrm{cl}_{\gamma,d}.
\end{equation}
For
$$
d\le\biggl[\frac{\beta_{**}}{\beta_{*}}\biggr]+1=19
$$
 condition~\eqref{betadelta} can be satisfied
 for any $\alpha$, so that~\eqref{LTdconst1} always holds.
\end{theorem}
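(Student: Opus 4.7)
The plan is to follow the Laptev--Weidl lifting argument~\cite{Lap-Weid}, using the one-dimensional matrix-valued Lieb--Thirring inequalities from Corollary~\ref{1D gamma-moments} iteratively in each coordinate direction. First I rewrite the Laplacian with a telescoping shift:
$$
-\Delta = \sum_{j=1}^{d-1}\left(-\partial_{x_j}^2 + \alpha_j^2\beta_j\right) + \left(-\partial_{x_d}^2 - \delta\right),
$$
where $\delta = \sum_{j=1}^{d-1}\alpha_j^2\beta_j$ and the parameters $\beta_j>0$ are chosen so that $\delta<1$. The shifts cancel, so $\mathcal{H}$ is unchanged, but now each of the first $d-1$ one-dimensional blocks is of the form $H_1(\beta_j)$ and the last is $H_2(\delta)$ from Section~\ref{sec2}.

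Next, I view $\mathcal{H}$ as an operator on $L_2(0,L_1;\mathcal{K}_1)$, where $\mathcal{K}_1$ is $L_2$ of the remaining $d-1$ coordinates. In this representation it is an $H_1(\beta_1)$-type operator in $x_1$ with operator-valued potential $\mathbb{W}_1(x_1)$ encoding the partial quadratic form of $\mathrm{P}V\mathrm{P}$ together with the remaining shifted Laplacians. The $\gamma$-moment inequality~\eqref{neg1AZ}, extended from matrix- to operator-valued potentials by finite-rank approximation (cf.~\cite{Lap-Weid}), yields
$$
\sum_n \lambda_n^\gamma \leq \frac{\pi}{\sqrt{3}}K_1(\beta_1)\,\mathrm{L}^{\mathrm{cl}}_{\gamma,1}\int_0^{L_1}\Tr_{\mathcal{K}_1}\bigl[\mathbb{W}_1(x_1)_+^{\gamma+1/2}\bigr]\,dx_1.
$$
The trace on the right is itself a sum of $(\gamma+1/2)$-moments of a reduced Schr\"odinger operator on $\mathcal{K}_1$ of exactly the same structure in dimension $d-1$. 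Iterating this $d-1$ times (using $K_1(\beta_j)$ at step $j$ for $j=1,\dots,d-1$) and finally once more in $x_d$ via~\eqref{neg2AZ} with $K_2(\delta)$, I arrive at
$$
\sum_n \lambda_n^\gamma \leq \left(\frac{\pi}{\sqrt{3}}\right)^d \biggl(\prod_{j=1}^{d-1}K_1(\beta_j)\biggr) K_2(\delta) \biggl(\prod_{k=0}^{d-1}\mathrm{L}^{\mathrm{cl}}_{\gamma+k/2,1}\biggr) \int_{\mathbb{T}^d_\alpha}V^{\gamma+d/2}\,dx.
$$
The telescoping identity $\prod_{k=0}^{d-1}\mathrm{L}^{\mathrm{cl}}_{\gamma+k/2,1}=\mathrm{L}^{\mathrm{cl}}_{\gamma,d}$, an immediate consequence of the Gamma-function formula~\eqref{class}, produces~\eqref{LTdconst}.

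For the sharpened bound~\eqref{LTdconst1}, condition~\eqref{betadelta} forces $K_1(\beta_j)=K_2(\delta)=1$ by Theorems~\ref{Th:K1} and~\ref{Th:K2}. To see that~\eqref{betadelta} is always achievable when $d\leq 19$, I simply take $\beta_j=\beta_*$ for all $j$; then $\delta=\beta_*\sum_{j=1}^{d-1}\alpha_j^2\leq(d-1)\beta_*$ since each $\alpha_j\leq 1$, and the inequality $(d-1)\beta_*\leq\beta_{**}$ reduces to $d-1\leq\beta_{**}/\beta_*\approx 18.64$, i.e.\ $d\leq 19$. The main obstacle is the rigorous passage from matrix-valued to operator-valued potentials at each iteration step: Corollary~\ref{1D gamma-moments} is stated for finite rank $M$, so one has to justify by finite-rank approximation of $\mathrm{P}V\mathrm{P}$ that the inequalities extend verbatim, which is the standard technique within the Laptev--Weidl framework.
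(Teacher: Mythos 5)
Your proposal is correct and follows essentially the same route as the paper: the telescoping shift of the Laplacian into $H_1(\beta_j)$ and $H_2(\delta)$ blocks, the Laptev--Weidl lifting via Corollary~\ref{1D gamma-moments} applied $d-1$ times in the long directions and once in $x_d$, the identity $\prod_{k=0}^{d-1}\mathrm{L}^{\mathrm{cl}}_{\gamma+k/2,1}=\mathrm{L}^{\mathrm{cl}}_{\gamma,d}$, and the choice $\beta_j=\beta_*$ giving $\delta\le(d-1)\beta_*\le\beta_{**}$ for $d\le19$. The point you flag about extending from matrix- to operator-valued potentials is handled in the paper implicitly in exactly the same way, so there is no substantive difference.
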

\begin{proof}
We write the operator $\mathcal{H}$ in the form
$$
\left(-\frac{d^2}{dx_1^2}+\alpha_1^2\beta_1\right)+\dots+
\left(-\frac{d^2}{dx_{d-1}^2}+\alpha_{d-1}^2\beta_{d-1}\right)+
\left(-\frac{d^2}{dx_d^2}-\delta\right)-\mathrm{P}(V(x)\mathrm{P}\,\cdot).
$$

We use the lifting argument with respect to dimensions developed in~\cite{Lap-Weid}.
More precisely, we apply estimate \eqref{neg1AZ} $d-1$ times with respect to
variables $x_1,\dots,x_{d-1}$, so that $\gamma$ is increased by $1/2$ at
each step, and, finally, we use~\eqref{neg2AZ} with respect to $x_d$. Using the variational principle and
denoting the negative pars of the operators by $[\,\cdot\,]_-$ we obtain
\begingroup
\allowdisplaybreaks
\begin{align*}
&\sum_n\lambda_n^\gamma(\mathcal{H})=\\=
 &\sum_n\lambda_n^\gamma\biggl(-\partial_1^2-\alpha_1^2\beta_1+
 \sum\nolimits_{j=2}^{d-1}(-\partial_j^2-\alpha_1^2\beta_1)
 +(-\partial_d^2+\delta-\mathrm{P}(V(x)\,\cdot))\biggr)\le\\\le
 &\sum_n\lambda_n^\gamma\biggl(-\partial_1^2-\alpha_1^2\beta_1+\\
 &\qquad+\left[\sum\nolimits_{j=2}^{d-1}(-\partial_j^2-\alpha_1^2\beta_1)
 +(-\partial_d^2+\delta-\mathrm{P}(V(x)\,\cdot))\right]_-\biggr)\le\\\le
&\frac\pi{\sqrt{3}}K_1(\beta_1)\mathrm{L}^\mathrm{cl}_{\gamma,1}\times\\
&\qquad\int_0^{L_1}
\Tr\left[\sum\nolimits_{j=2}^{d-1}(-\partial_j^2-\alpha_1^2\beta_1)
+(-\partial_d^2+\delta-\mathrm{P}(V(x)\,\cdot))\right]_-^{\gamma+1/2}dx_1\le\\
 &\dots\dots\dots\\\le
&\left(\frac\pi{\sqrt{3}}\right)^{d-1}\prod_{j=1}^{d-1}(K_1(\beta_j)
\prod_{j=1}^{d-1}\mathrm{L}^\mathrm{cl}_{\gamma+(j-1)/2,1}\times\\
&\qquad\int_0^{L_1}\dots
\int_0^{L_{d-1}}\Tr\bigl[(-\partial_d^2+\delta-\mathrm{P}(V(x)\,\cdot))\bigr]_-^{\gamma+(d-1)/2}
dx_1\dots dx_{d-1}\le\\\le
&\left(\frac\pi{\sqrt{3}}\right)^{d}\prod_{j=1}^{d-1}K_1(\beta_j)
\prod_{j=1}^{d}\mathrm{L}^\mathrm{cl}_{\gamma+(j-1)/2,1}K_2(\delta)
\int_{\mathbb{T}^d_\alpha}V^{\gamma+d/2}(x)dx,
\end{align*}
\endgroup
which proves \eqref{LTdest}, \eqref{LTdconst} since (see~\eqref{class})
$$
\prod_{j=1}^{d}\mathrm{L}^\mathrm{cl}_{\gamma+(j-1)/2,1}=\mathrm{L}^\mathrm{cl}_{\gamma,d}.
$$
Finally, if we take $\beta_j=\beta_*=0.045\dots$, then $K_1(\beta_j)=1$.
Since $\alpha_j\le1$, it follows that
$$
\delta=\sum_{j=1}^{d-1}\alpha_j^2\beta_j=\beta_{*}\sum_{j=1}^{d-1}\alpha_j^2
\le(d-1)\beta_*.
$$
Therefore the condition $\delta\le\beta_{**}=0.839\dots$
(giving $K_2(\delta)=1$) is always satisfied
if
$$
d-1\le\left[\frac{\beta_{**}}{\beta_*}\right]=18.
$$
The proof is complete.
\end{proof}
\begin{remark}\label{R:d-to-d-1}
{\rm The complementary projection $\mathrm{Q}=I-\mathrm{P}$ maps
$\dot L_2(\mathbb{T}^d_\alpha)$ onto $\dot
L_2(\mathbb{T}^{d-1}_{\alpha'})$, where
$\alpha'=(\alpha_1,\dots,\alpha_{d-1})$ and
$\dot L_2(\mathbb{T}^{d-1}_{\alpha'})$ is the subspace of
 $\dot L_2(\mathbb{T}^d_\alpha)$ of functions
 independent of $x_d$.
}
\end{remark}

\begin{remark}\label{R:Rd-Td}
{\rm
If we compare~\eqref{DLL} and \eqref{LTdconst1},  we see that
the factor $\pi/\sqrt{3}$ accumulates in~\eqref{LTdconst1} at each iteration
with respect to the dimension; while in~\eqref{DLL} already
at the second iteration sharp bounds \cite{Lap-Weid} for the $\gamma$-moments
with $\gamma\ge\frac32$ are available,  which are not known in the
periodic case.
}
\end{remark}

We single out the following corollary that is important for applications.
Let $d=2$ so that
$\mathbb{T}^2_\alpha=(0,2\pi/{\alpha})\times(0,2\pi)$, $\alpha\le1$.

\begin{corollary}\label{Cor:2dtorus}
Let $V\ge0$ and let $V\in L_{2}(\mathbb{T}^2_\alpha)$. Then
the negative eigenvalues $-\lambda_j\le0$ of the operator
\eqref{Schrod} satisfy the following bound:
\begin{equation}\label{LTT221}
\sum_j\lambda_j\le
\left(\frac\pi{\sqrt{3}}\right)^2
\mathrm{L}_{1,2}^{\mathrm{cl}}\int_{\mathbb{T}^2_\alpha}V^2(x)dx=
\frac\pi{24}\int_{\mathbb{T}^2_\alpha}V^2(x)dx.
\end{equation}
Equivalently (see \eqref{L-T-orth-orig}, \eqref{k=L}),  if a family $\{\varphi_j\}_{j=1}^N\in \mathrm{P} H^1(\mathbb{T}^2_\alpha)$ is
orthonormal, then
$\rho_\varphi(x):=\sum_{j=1}^N\varphi_j(x)^2$ satisfies
\begin{equation}\label{L-T-T2}
\int_{\mathbb{T}^2_\alpha}\rho_\varphi(x)^{2}dx\le
\frac\pi 6\sum_{j=1}^N\|\nabla\varphi_j\|^2,
\end{equation}

\end{corollary}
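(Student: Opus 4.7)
The corollary is a direct specialization of Theorem~\ref{T:ddim} to the case $d=2$, $\gamma=1$, followed by a conversion between the Lieb--Thirring constant $\mathrm{L}_{1,2}$ and the orthonormal-systems constant $\mathrm{k}_2$ via the identity~\eqref{k=L}. The plan is therefore to verify that the hypotheses of Theorem~\ref{T:ddim} are satisfied for $d=2$, compute the numerical value of the resulting constant, and then translate it into the dual formulation.

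First I would apply Theorem~\ref{T:ddim} with $d=2$, $\gamma=1$. Since $2\le 19$, the dimensional condition is satisfied, and one can choose $\beta_1\ge\beta_*$ with $\delta=\alpha_1^2\beta_1<1$; indeed $\alpha_1\le 1$ implies $\delta\le\beta_1$, so taking $\beta_1=\beta_*$ gives $\delta\le\beta_*\le\beta_{**}$, hence $K_1(\beta_1)=K_2(\delta)=1$ by Theorems~\ref{Th:K1} and~\ref{Th:K2}. Consequently, bound~\eqref{LTdconst1} applies and yields
\begin{equation*}
\sum_j\lambda_j\le\left(\frac{\pi}{\sqrt{3}}\right)^2\mathrm{L}^{\mathrm{cl}}_{1,2}\int_{\mathbb{T}^2_\alpha}V^2(x)\,dx.
\end{equation*}

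Next I would evaluate the semiclassical constant using~\eqref{class}:
\begin{equation*}
\mathrm{L}^{\mathrm{cl}}_{1,2}=\frac{\Gamma(2)}{2^{2}\pi\,\Gamma(3)}=\frac{1}{8\pi},
\end{equation*}
so that $(\pi/\sqrt{3})^{2}\mathrm{L}^{\mathrm{cl}}_{1,2}=(\pi^{2}/3)\cdot(1/(8\pi))=\pi/24$. This establishes~\eqref{LTT221}.

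For the equivalent orthonormal-system statement~\eqref{L-T-T2}, I would invoke the duality~\eqref{k=L} with $d=2$. Specifically,
\begin{equation*}
\mathrm{k}_{2}=\frac{2}{2}\left(1+\frac{2}{2}\right)^{1+2/2}\mathrm{L}_{1,2}^{2/2}=4\,\mathrm{L}_{1,2}\le 4\cdot\frac{\pi}{24}=\frac{\pi}{6},
\end{equation*}
and substitution of this bound into~\eqref{L-T-orth-orig} (applied in the subspace $\mathrm{P}H^{1}$, which is where the operator~\eqref{Schrod} lives) yields~\eqref{L-T-T2}. There is no real obstacle here: the only point requiring care is the verification that the smallness condition on $\delta$ holds uniformly in $\alpha\le 1$, which is immediate because $\delta=\alpha^{2}\beta_{*}\le\beta_{*}<\beta_{**}$, so the $\alpha$-independent bound $\pi/24$ is indeed obtained for every elongation of the torus.
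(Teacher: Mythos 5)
Your proposal is correct and proceeds exactly as the paper intends: the corollary is a direct specialization of Theorem~\ref{T:ddim} to $d=2$, $\gamma=1$ (with $\beta_1=\beta_*$ forcing $\delta=\alpha^2\beta_*\le\beta_*<\beta_{**}$, hence $K_1=K_2=1$), and then the numerical value $\mathrm{L}^{\mathrm{cl}}_{1,2}=1/(8\pi)$ gives $\pi/24$, after which the duality~\eqref{k=L} with $d=2$ yields $\mathrm{k}_2=4\mathrm{L}_{1,2}\le\pi/6$. The paper states the corollary without proof because this chain of verifications is precisely what is meant; your write-up supplies those details correctly.
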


\setcounter{equation}{0}
\section{Applications to attractors}\label{sec4}

Starting from the important paper~\cite{Lieb} the Lieb--Thirring inequalities
play an essential role in the theory of attractors for infinite dimensional
dissipative dynamical systems, especially, for the Navier--Stokes system.
They are used for the first moments ($\gamma=1$) in the equivalent
formulation in terms of orthonormal families.

We first consider the square torus $\mathbb{T}^2=(0,L)\times(0,L)$.

\begin{proposition}\label{T:square}
If $\{v_j\}_{j=1}^m\in \dot  H^1(\mathbb{T}^2)$ is an orthonormal family of vector  functions and $\div v_j=0$, then
$
\rho_v(x):=\sum_{j=1}^m|v_j(x)|^2
$
satisfies
\begin{equation}\label{rhov}
\int_{\mathbb{T}^2}\rho_v(x)^2dx\le  c_{\mathrm{LT}}\sum_{j=1}^m\|\nabla v_j\|^2,
\qquad c_{\mathrm{LT}}\le \frac32.
\end{equation}
\end{proposition}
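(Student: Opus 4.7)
The plan is to reduce~\eqref{rhov} to the scalar Lieb--Thirring bound $\mathrm{L}_{1,2}(\mathbb{T}^2)\le 3/8$ recalled in~\eqref{T2}---equivalently $\mathrm{k}_2\le 3/2$ via~\eqref{k=L}---by representing each divergence-free field through its stream function and then bundling its two real components into a single complex scalar. Since every $v_j\in \dot H^1(\mathbb{T}^2)$ is divergence-free and has zero mean, it has a unique zero-mean (real) stream function $\psi_j\in\dot H^2(\mathbb{T}^2)$ with
$$
v_j=\nabla^\perp\psi_j=(\partial_2\psi_j,\,-\partial_1\psi_j).
$$
I would then introduce the complex-valued scalar family
$$
\Phi_j:=\partial_1\psi_j+i\,\partial_2\psi_j,\qquad j=1,\dots,m,
$$
and apply the known orthonormal-family Lieb--Thirring inequality on $\mathbb{T}^2$ directly to $\{\Phi_j\}$.

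What must be verified is threefold: (i)~$|\Phi_j|^2=(\partial_1\psi_j)^2+(\partial_2\psi_j)^2=|v_j|^2$ pointwise, so $\sum_j|\Phi_j|^2=\rho_v$; (ii)~$\{\Phi_j\}$ is orthonormal in $L_2(\mathbb{T}^2;\mathbb{C})$ and each $\Phi_j$ has zero mean; and (iii)~$\|\nabla\Phi_j\|^2=\|\nabla v_j\|^2$. Items (i) and (iii) are routine: (iii) follows by expanding $|\nabla\Phi_j|^2=(\partial_1^2\psi_j)^2+2(\partial_1\partial_2\psi_j)^2+(\partial_2^2\psi_j)^2$ and using the periodic identity $\int(\partial_1\partial_2\psi_j)^2=\int\partial_1^2\psi_j\cdot\partial_2^2\psi_j$, so that both $\|\nabla\Phi_j\|^2$ and $\|\nabla v_j\|^2$ reduce to $\|\Delta\psi_j\|^2$. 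The zero-mean property in (ii) is immediate since each $\Phi_j$ is a sum of periodic derivatives.

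The only substantive step is the orthonormality in (ii). Expanding the sesquilinear form yields
$$
(\Phi_i,\Phi_j)_{L_2}=(\nabla\psi_i,\nabla\psi_j)_{L_2}+i\bigl[(\partial_2\psi_i,\partial_1\psi_j)-(\partial_1\psi_i,\partial_2\psi_j)\bigr].
$$
The real part equals $(v_i,v_j)=\delta_{ij}$ (since $v_j=\nabla^\perp\psi_j$); the imaginary cross term is $\int(\partial_2\psi_i\partial_1\psi_j-\partial_1\psi_i\partial_2\psi_j)\,dx$, which vanishes on the torus because integrating by parts once in each coordinate shows that both inner products equal $-(\psi_i,\partial_1\partial_2\psi_j)$ (equivalently, the integrand is the Jacobian $J(\psi_i,\psi_j)$, a pure divergence on~$\mathbb{T}^2$). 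This periodic integration-by-parts cancellation is the crux of the reduction.

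Once (i)--(iii) are in hand, applying the scalar Lieb--Thirring inequality~\eqref{L-T-orth-orig} on $\mathbb{T}^2$ to $\{\Phi_j\}$---which is legitimate because $\{\Phi_j\}$ is an orthonormal family of zero-mean (complex) scalar functions---with the constant $\mathrm{k}_2\le 4\,\mathrm{L}_{1,2}(\mathbb{T}^2)\le 3/2$ from~\eqref{k=L} and~\eqref{T2}, gives
$$
\int_{\mathbb{T}^2}\rho_v^2\,dx=\int_{\mathbb{T}^2}\Bigl(\sum_j|\Phi_j|^2\Bigr)^{2}dx\le \mathrm{k}_2\sum_j\|\nabla\Phi_j\|^2\le\frac32\sum_j\|\nabla v_j\|^2,
$$
which is~\eqref{rhov} with $c_{\mathrm{LT}}\le 3/2$.
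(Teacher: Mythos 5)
Your proof is correct. The key reduction---representing each divergence-free field through its zero-mean stream function $\psi_j$, packaging $(\partial_1\psi_j,\partial_2\psi_j)$ into the single complex scalar $\Phi_j=\partial_1\psi_j+i\partial_2\psi_j$, and checking that orthonormality, zero mean, pointwise modulus, and Dirichlet energy are all preserved exactly---is sound. The orthonormality computation is the only non-trivial part, and you handle it correctly: the real part of $(\Phi_i,\Phi_j)$ equals $(\nabla\psi_i,\nabla\psi_j)=(v_i,v_j)=\delta_{ij}$, while the imaginary part is $\mp\int J(\psi_i,\psi_j)$, a total divergence on the torus, hence zero. (In fact $|\nabla\Phi_j|^2=|\nabla v_j|^2$ already pointwise, so the detour through $\|\Delta\psi_j\|^2$ and the identity $\int(\partial_1\partial_2\psi)^2=\int\partial_1^2\psi\,\partial_2^2\psi$ is unnecessary, though harmless.) Applying the scalar inequality to the complex orthonormal family is legitimate because the eigenvalue form of Lieb--Thirring, and hence the constant $\mathrm{k}_2=4\,\mathrm{L}_{1,2}\le 3/2$ from~\eqref{k=L} and~\eqref{T2}, applies verbatim on complex $L_2$.

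The paper itself gives no argument here---it just cites~\cite{I12JST} for the scalar bound and~\cite{I-MS2005} for the statement that the divergence-free vector case does not worsen the constant. Your stream-function/complex-scalar reduction is precisely the kind of argument underlying that citation; you have supplied a complete, self-contained proof of the step the paper delegates to the references.
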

\begin{proof} (See~\cite{I12JST}.) In the scalar case this follows from
\eqref{T2} and \eqref{L-T-orth-orig}, \eqref{k=L}. In two dimensions the passage
to the vector case with $\div v_j=0$ does not increase the constant \cite{I-MS2005}.
\end{proof}

Turning to the applications we consider the damped and driven Navier--Stokes system
\begin{equation}\label{DDNSE}
\begin{cases}
\partial_t u+\sum_{i=1}^2u^i\partial_i u=\nu\Delta u-\mu u-\nabla p+g,\\
\div u=0,\ \ \ u\big|_{t=0}=u_0.
\end{cases}
\end{equation}
in a periodic square domain
$\mathbb{T}^2=(0,L)\times(0,L)$.
 We  assume that  $g$ and $u$ have mean values zero.
The system is studied in the small viscosity limit $\nu\to0^+$,
while the drag/damping coefficient $\mu>0$ is arbitrary but fixed.

Using the standard notation in the theory of the Navier--Stokes equations
we denote by $H$ the closure
in
$L_2(\mathbb{T}^2)$ of the set of trigonometric polynomials
with divergence and  mean value zero. The norm $\|\cdot\|$
and scalar product
$(\cdot\,,\cdot)$ in $H$ are those of $L_2(\mathbb{T}^2)$.
We project  the first equation onto $H$ and  obtain
the functional evolution equation
\begin{equation}
\partial_tu+B(u,u)+\nu Au=-\mu u+g, \qquad u(0)=u_0,
\label{FNSE}
\end{equation}
where $A$ is the Stokes operator and
$B(u,v)$ is the nonlinear term defined as follows
$$
\langle Au,v\rangle=(\nabla u,\nabla v), \qquad u,v \in H^1\cap H,
$$
and
\begin{equation}\label{trilinear-form}
\langle B(v,u),w\rangle=\int_{\mathbb{T}^2}
\sum_{i,k=1}^2v^k\partial_ku^iw^i\,dx=:b(v,u,w),
\end{equation}
for all $u,v,w\in H^1\cap H$.

Equation~(\ref{FNSE}) has a unique solution $u(t)$ and
the solution semigroup $S_tu_0\to u(t)$
is well defined. The semigroup $S_t$ has a global attractor $\mathcal{A}$
which is a compact strictly invariant set in $H$ attracting under
the action of $S_t$ all bounded sets as $t\to\infty$.
These facts are well known for the classical Navier--Stokes equations
\cite{B-V},\cite{CF88},\cite{Lad},\cite{T};
the case $\mu>0$ is similar.
The solution semigroup $S_t$ is uniformly
 differentiable in $H$
with differential $L(t,u_0):\xi\to U(t)\in H$,
where $U(t)$ is the solution of the variational equation
\begin{equation}\label{var-eq}
\partial_tU=-\nu AU-\mu U\ -
B(U,u(t))-B(u(t),U)=:{\mathcal L}(t,u_0)U, \qquad U(0)=\xi.
\end{equation}
Furthermore, the differential $L(t,u_0)$ depends continuously
on the initial point $u_0\in\mathcal{A}$~\cite{B-V}.

We estimate the numbers
$q(m)$, that is, the sums of the first $m$ global Lyapunov exponents:
\cite{C-F85},\cite{CF88},\cite{T}:
\begin{equation}\label{defqm}
q(m):=\limsup_{t\to\infty}\ \sup_{u_0\in {\mathcal A}}\ \
\sup_{\{v_j\}_{j=1}^m\in H\cap H^1}
\frac{1}t
\int_0^t \sum_{j=1}^m\bigl({\mathcal L}(\tau,u_0)v_j,v_j\bigr)d\tau,
\end{equation}
where $\{v_j\}_{j=1}^m\in H\cap H^1$ is an arbitrary
orthonormal system of dimension~$m$.

We first estimate the  $H^1$-norm of the solutions on the attractor.
Taking the scalar product of  (\ref{FNSE}) with  $Au$,
 using the identity $(B(u,u),Au)=0$
(see, for example,~\cite{CF88},\cite{T}) and
integrating by parts we obtain
\begin{equation*}
\aligned
\partial_t\|\nabla u\|^2+2\nu\|Au\|^2+2\mu\|\nabla u\|^2=
2(\nabla g,\nabla u)\le \mu\|\nabla u\|^2+\mu^{-1}\|\nabla g\|^2.
\endaligned
\end{equation*}
Dropping the $\nu$-term on the left-hand side, the  Gronwall inequality
gives
$$
\|\nabla u(t)\|^2\le\|\nabla u(0)\|^2e^{-\mu t}+\frac{1-e^{-\mu t}}{\mu^2}\|\nabla g\|^2,
$$
so
 that on the attractor
$u(t)\in\mathcal{A}$ letting $t\to\infty$ we have
a $\nu$-independent estimate
\begin{equation}\label{estH1}
\|\nabla u(t)\|^2\le \frac{\|\nabla  g\|^2}{\mu^2}.
\end{equation}

We now estimate the  $m$-trace of the operator
 $\mathcal L$ in~(\ref{defqm}).
Integrating by parts and using the identity  $(B(u(t),v_j),v_j)=0$
(see~\cite{CF88},\cite{T})
and the orthonormality of the  $v_j$'s, we obtain
\begin{equation}\label{trace}
\sum_{j=1}^m\bigl({\mathcal L}(t,u_0)v_j,v_j\bigr)=
-\nu\sum_{j=1}^m\|\nabla v_j\|^2-\mu m-
\sum_{j=1}^mb(v_j,u(t),v_j).
\end{equation}

For the last term we use the point-wise inequality
\begin{equation}\label{root2}
\big|\sum_{k,i=1}^2
v^k\partial_{k}u^iv^i\big|\le
2^{-1/2}|\nabla u||v|^2
\end{equation}
which holds for any   $v=(v^1,v^2)$ and any $2\times2$
 matrix $\nabla u=\left(\partial_{i}u^j\right)_{i,j=1}^2$
 such that $\partial_{1}u^1+\partial_{2}u^2=0$, see \cite[Lemma~4.1]{Ch-I}.
Using \eqref{root2} and  the Lieb--Thirring inequality~\eqref{rhov}  we obtain
\begingroup
\allowdisplaybreaks
\begin{align*}
&\sum_{j=1}^m\bigl({\mathcal L}(t,u_0)v_j,v_j\bigr)\le
-\nu\sum_{j=1}^m\|\nabla v_j\|^2-\mu m+\frac1{\sqrt{2}}\int|\nabla u(t,x)|\rho_v(x)dx\le\\
&-\nu\sum_{j=1}^m\|\nabla v_j\|^2-\mu m+\frac1{\sqrt{2}}\|\nabla u(t)\|\|\rho_v\|\le\\
&-\nu\sum_{j=1}^m\|\nabla v_j\|^2-\mu m+\frac1{\sqrt{2}}\|\nabla u(t)\|
\left(c_{\mathrm{LT}}\sum_{j=1}^m\|\nabla v_j\|^2\right)^{1/2}\le\\
&-\nu\sum_{j=1}^m\|\nabla v_j\|^2-\mu m+\frac{c_{\mathrm{LT}}}{8\nu}\|\nabla u(t)\|^2+
\nu\sum_{j=1}^m\|\nabla v_j\|^2=\\
&-\mu m+\frac{c_{\mathrm{LT}}}{8\nu}\|\nabla u(t)\|^2.
\end{align*}
\endgroup
Now we see from \eqref{defqm} and \eqref{estH1} that
\begin{equation}\label{qm1}
q(m)\le-\mu m+\frac{c_{\mathrm{LT}}\|\nabla g\|^2}{8\nu\mu^2}.
\end{equation}

We can proceed in a somewhat different way observing that
$$
m=\int\rho_v(x)dx\le\|\rho_v\|L\qquad\Rightarrow\qquad
\sum_{j=1}^m\|\nabla v_j\|^2\ge\frac1{c_{\mathrm{LT}}}\|\rho_v\|^2\ge
\frac1{c_{\mathrm{LT}}}\frac{m^2}{L^2}\,.
$$
Then we argue as before  but in the last but one
line  single out the term $\nu /2\sum_{j=1}^m\|\nabla v_j\|^2$
to absorb it in the half of the first term. We obtain
\begin{equation}\label{qm2}
q(m)\le-\frac{\nu m^2}{2c_{\mathrm{LT}}L^2}+\frac{c_{\mathrm{LT}}\|\nabla g\|^2}{4\nu\mu^2}.
\end{equation}

Now if for an $m^*$ we have $q(m^*)<0$, then both the Hausdorff
dimension \cite{C-F85,T}, and the fractal dimension \cite{CI01,Ch-I} of the
attractor $\mathcal{A}$ satisfy
$$
\dim_H\mathcal{A}\le\dim_F\mathcal{A}<m^*.
$$
Therefore  estimates \eqref{qm1} and \eqref{qm2} along with \eqref{rhov} show that
we have proved the following theorem.
\begin{theorem}\label{T:estforsquare}
The fractal dimension of the attractor for the damped-driven
Navier-Stokes system~\eqref{DDNSE} satisfy the estimate
\begin{equation}\label{estforsquare}
\dim_F\mathcal{A}\le\min\left(\frac{c_\mathrm{LT}}8\frac{\|\nabla g\|^2}{\nu\mu^3},
\frac{c_\mathrm{LT}}{\sqrt{2}}\frac{\|\nabla g\|L}{\nu\mu} \right)\le
\min\left(\frac{3}{16}\frac{\|\nabla g\|^2}{\nu\mu^3},
\frac{3}{2\sqrt{2}}\frac{\|\nabla g\|L}{\nu\mu} \right).
\end{equation}
\end{theorem}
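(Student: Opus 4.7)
The theorem is essentially a packaging of the two bounds \eqref{qm1} and \eqref{qm2} already derived just above its statement, combined with the standard criterion that if $q(m^*)<0$ for some integer $m^*$, then $\dim_H\mathcal{A}\le\dim_F\mathcal{A}<m^*$ (from \cite{C-F85,T,CI01,Ch-I}). So the plan is to take each of the two available upper bounds for $q(m)$, find the smallest $m$ making the right-hand side negative, and then take the minimum of the two values (rounded up).

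First, from \eqref{qm1} the bound $-\mu m+\frac{c_{\mathrm{LT}}\|\nabla g\|^2}{8\nu\mu^2}$ is negative as soon as
\[
m>\frac{c_{\mathrm{LT}}\|\nabla g\|^2}{8\nu\mu^3},
\]
yielding the first entry in the minimum. Next, from \eqref{qm2} the bound $-\frac{\nu m^2}{2c_{\mathrm{LT}}L^2}+\frac{c_{\mathrm{LT}}\|\nabla g\|^2}{4\nu\mu^2}$ is negative as soon as
\[
m^2>\frac{c_{\mathrm{LT}}^2L^2\|\nabla g\|^2}{2\nu^2\mu^2},\qquad\text{i.e.,}\qquad
m>\frac{c_{\mathrm{LT}}}{\sqrt 2}\,\frac{\|\nabla g\|L}{\nu\mu},
\]
yielding the second entry. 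Since both criteria are independently valid, $\dim_F\mathcal{A}$ is bounded by the minimum of the two expressions, which is exactly the first inequality in \eqref{estforsquare}.

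Finally, to obtain the explicit numerical constants on the right of \eqref{estforsquare}, I substitute the estimate $c_{\mathrm{LT}}\le 3/2$ from Proposition \ref{T:square}: this turns $c_{\mathrm{LT}}/8$ into $3/16$ and $c_{\mathrm{LT}}/\sqrt{2}$ into $3/(2\sqrt{2})$. There is essentially no obstacle here, as the two enstrophy-type estimates \eqref{qm1}, \eqref{qm2}, the uniform $H^1$-bound \eqref{estH1}, the trace identity \eqref{trace} with the pointwise inequality \eqref{root2}, and the Lieb--Thirring inequality \eqref{rhov} have all already been assembled in the preceding text; what remains is the elementary solution of two linear and quadratic inequalities in $m$ and the appeal to the dimension criterion. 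The only minor point to justify cleanly is that the minimum of the two upper bounds on $m^*$ indeed controls $\dim_F\mathcal{A}$, which follows because each of them individually does.
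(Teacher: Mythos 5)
Your proposal is correct and matches the paper's proof exactly: the paper likewise derives \eqref{qm1} and \eqref{qm2}, invokes the dimension criterion that $q(m^*)<0$ implies $\dim_F\mathcal{A}<m^*$, reads off the roots of the two upper bounds for $q(m)$ to get the two entries in the minimum, and then inserts $c_{\mathrm{LT}}\le 3/2$ from Proposition~\ref{T:square}.
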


Let us now  consider the system \eqref{DDNSE} on the large elongated torus
$\mathbb{T}^2_\alpha=(0,L/\alpha)\times(0,L)$, where $\alpha\le1$.
As before we assume that both  scalar and vector functions have
mean value zero over $\mathbb{T}^2_\alpha$, and we decompose
the phase space
$ \dot L_2(\mathbb{T}^2_\alpha)= L_2(\mathbb{T}^2_\alpha)\cap \int_{\mathbb{T}^2_\alpha} u(x)dx=0$
into the orthogonal sum
\begin{equation}\label{Ldotsum}
 \dot L_2(\mathbb{T}^2_\alpha)=\mathrm{P}\dot L_2(\mathbb{T}^2_\alpha)\oplus
 \mathrm{Q}\dot L_2(\mathbb{T}^2_\alpha),
\end{equation}
where the orthogonal projection $\mathrm{P}$ is as in \eqref{projP1} in the 2d case,
and the projection $\mathrm{Q}$
$$
(\mathrm{Q}\psi)(x_1)=\frac1{L}\int_0^{L}
\psi(x_1,t)dt
$$
maps $\dot L_2(\mathbb{T}^2_\alpha)$ onto $\dot L_2(0,L/\alpha)$.

On the elongated torus we have the following Lieb--Thirring inequalities
for orthonormal families.
\begin{proposition}\label{T:alpha}
If
$\{v_j\}_{j=1}^m\in\mathrm{P}\dot L_2(\mathrm{T}^2_\alpha)$ is an  orthonormal
family in $L_2(\mathrm{T}^2_\alpha)$
and  $\div v_j=0$,   then
$
\rho_{\mathrm{P} v}(x):=\sum_{j=1}^m| v_j(x)|^2
$
satisfies
\begin{equation}\label{LTonP}
\int_{\mathrm{T}^2_\alpha}\rho_{\mathrm{P} v}(x)^2dx\le
c_{\mathrm{P}}\sum_{j=1}^m\|\nabla  v_j\|^2,\quad c_{\mathrm{P}}\le\frac\pi 6\,.
\end{equation}

Accordingly, if
$\{w_j\}_{j=1}^m\in\mathrm{Q}\dot L_2(\mathrm{T}^2_\alpha)$ is an
orthonormal family of  vector functions
 and  $\div w_j=0$,  then
$ \rho_{\mathrm{Q} w}(x):=\sum_{j=1}^m| w_j(x)|^2 $
satisfies
\begin{equation}\label{LTonQ}
\int_{\mathrm{T}^2_\alpha}\rho_{\mathrm{Q} w}(x)^2dx\le
\frac{c_{\mathrm{Q}}}L\sum_{j=1}^m\|\nabla  w_j\|,\quad c_{\mathrm{Q}}\le 6.
\end{equation}
\end{proposition}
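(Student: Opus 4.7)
The plan is to establish the two inequalities \eqref{LTonP} and \eqref{LTonQ} separately, exploiting the orthogonal decomposition \eqref{Ldotsum}, since the subspaces $\mathrm{P}\dot L_2(\mathbb{T}^2_\alpha)$ and $\mathrm{Q}\dot L_2(\mathbb{T}^2_\alpha)$ have very different structure.

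For \eqref{LTonP}, each $v_j \in \mathrm{P}\dot L_2(\mathbb{T}^2_\alpha)$ automatically has zero $x_2$-average for every $x_1$, so Corollary~\ref{Cor:2dtorus} (in the form \eqref{L-T-T2}) applies directly and gives the scalar bound with constant $\pi/6$. To handle the divergence-free vector case in two dimensions I would invoke the stream-function reduction already used in Proposition~\ref{T:square}: write $v_j = \nabla^\perp\Psi_j$, so that the $\{\Psi_j\}$ form an orthonormal family in $\dot H^1(\mathbb{T}^2_\alpha)$ with $|v_j|^2 = |\nabla\Psi_j|^2$ and $\|\nabla v_j\|^2 = \|\Delta\Psi_j\|^2$. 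Applying \eqref{L-T-T2} to the $\Psi_j$ yields the same constant, which furnishes $c_\mathrm{P} \le \pi/6$ just as in \cite{I-MS2005}.

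For \eqref{LTonQ}, any $w \in \mathrm{Q}\dot L_2(\mathbb{T}^2_\alpha)$ depends only on $x_1$, and the conditions $\div w = 0$ and $\int w = 0$ force $w = (0,\psi(x_1))$ with $\psi \in \dot H^1(0, L/\alpha)_{\mathrm{per}}$. The two-dimensional orthonormality $\int_{\mathbb{T}^2_\alpha} w_j\cdot\bar w_k\,dx = \delta_{jk}$ becomes $L\int_0^{L/\alpha}\psi_j\bar\psi_k\,dx_1 = \delta_{jk}$, so after rescaling $\phi_j := \sqrt{L}\,\psi_j$ one has an orthonormal zero-mean family in $L_2(0,L/\alpha)$. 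A direct computation then reduces \eqref{LTonQ} to the one-dimensional inequality
\[
\int_0^{L/\alpha}\Bigl(\sum_{j=1}^m \phi_j^2(x_1)\Bigr)^2 dx_1 \;\le\; c_\mathrm{Q}\sum_{j=1}^m \|\phi_j'\|_{L_2(0,L/\alpha)}.
\]
The plan for this step is to combine the sharp Sobolev embedding $\|\phi\|_\infty^2 \le \|\phi'\|_{L_2}\|\phi\|_{L_2}$ for zero-mean periodic functions (the case $\beta = 0$ of Theorem~\ref{Th:K2}, giving $K_2(0) = 1$), the orthonormality identity $\int\tilde\rho\,dx_1 = m$ (where $\tilde\rho := \sum_j\phi_j^2$), and the one-dimensional Lieb--Thirring bound $\int\tilde\rho^3 \le \sum_j\|\phi_j'\|^2$ that follows from Theorem~\ref{T:matrS1} in the scalar case.

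The hard part will be producing the numerical constant $c_\mathrm{Q} \le 6$ independently of $m$ and $\alpha$. A naive pointwise use of the Sobolev bound combined with $\|\tilde\rho\|_\infty \le \sum_j\|\phi_j\|_\infty^2$ yields only $\int\tilde\rho^2 \le m\sum_j\|\phi_j'\|$, with an unwanted factor of $m$. Eliminating this factor is the technical heart of the argument: it will require exploiting the orthonormality of the $\phi_j$ more carefully, either by expanding $\int\tilde\rho^2 = \sum_{j,k}\int\phi_j^2\phi_k^2$ and applying the sharp Sobolev inequality to each off-diagonal product $\phi_j\phi_k$ (which is itself zero-mean for $j\ne k$ by orthogonality), or by Hölder-interpolating $\int\tilde\rho^2 \le (\int\tilde\rho)^{1/2}(\int\tilde\rho^3)^{1/2} \le (m\sum_j\|\phi_j'\|^2)^{1/2}$ and then using the structural constraints on orthonormal zero-mean families on $(0, L/\alpha)$ to recast the right-hand side as $c_\mathrm{Q}\sum_j\|\phi_j'\|$. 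Once this refined 1D estimate is in hand, the rest of the proof is routine bookkeeping.
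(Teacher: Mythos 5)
Your reduction of \eqref{LTonQ} to the one-dimensional inequality $\int_0^{L/\alpha}\tilde\rho^2\,dx_1 \le c_\mathrm{Q}\sum_j\|\phi_j'\|$ for a zero-mean orthonormal scalar family is correct, and you rightly identify that the whole difficulty is getting a \emph{linear} dependence on $\|\phi_j'\|$. But none of the avenues you sketch close the gap. The H\"older route runs the wrong way: $\int\tilde\rho = m$ together with $\int\tilde\rho^3 \le \sum_j\|\phi_j'\|^2$ gives $\int\tilde\rho^2 \le (m\sum_j\|\phi_j'\|^2)^{1/2}$, while Cauchy--Schwarz gives $(m\sum_j\|\phi_j'\|^2)^{1/2} \ge \sum_j\|\phi_j'\|$ — so you have bounded $\int\tilde\rho^2$ by something \emph{larger} than your target, and there is no way to go from the former to the latter. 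The cross-product expansion of $\int\tilde\rho^2 = \sum_{j,k}\int\phi_j^2\phi_k^2$ is left undeveloped, and it is not clear how summing pointwise Sobolev bounds on $\phi_j\phi_k$ would avoid reintroducing a factor of $m$. The missing ingredient, which is the essence of the paper's proof, is a Lieb--Thirring inequality for the \emph{first-order} operator $(-d^2/dx^2)^{1/2}$ on zero-mean periodic functions, quoted from~\cite{ILMS}:
\begin{equation*}
\int_0^{L/\alpha}\Bigl(\sum_{j=1}^m\tilde\psi_j(x)^2\Bigr)^2dx
\le 6\sum_{j=1}^m\bigl\|\tilde\psi_j^{(1/2)}\bigr\|^2_{L_2(0,L/\alpha)},
\end{equation*}
which, combined with the interpolation $\|\psi^{(1/2)}\|^2\le\|\psi\|\,\|\psi'\|$ and the normalizations $\|\tilde\psi_j\|=1$, produces the linear right-hand side $6\sum_j\|\tilde\psi_j'\|$. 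Without this fractional Lieb--Thirring estimate the constant $c_\mathrm{Q}\le6$ is out of reach by the elementary tools you list.

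For \eqref{LTonP} you reach the right conclusion, but the stream-function sketch as written does not go through: if $v_j=\nabla^\perp\Psi_j$ then $\rho_{\mathrm{P}v}=\sum_j|\nabla\Psi_j|^2$, whereas applying \eqref{L-T-T2} to the family $\{\Psi_j\}$ bounds $\int\bigl(\sum_j\Psi_j^2\bigr)^2$ by $\sum_j\|\nabla\Psi_j\|^2$ — a different quantity on the left. The paper handles this by simply citing~\cite{I-MS2005} (via Proposition~\ref{T:square}) for the fact that the divergence-free vector case does not increase the constant; you should treat that step as a black-box citation rather than pretend to rederive it by a substitution that does not match up.
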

\begin{proof}
The proof of \eqref{LTonP} is the same as in
Proposition~\ref{T:square}.

Turning to \eqref{LTonQ} we observe that
 $w_j$ depend only on $x_1$ and the family
 $\tilde w_j=\sqrt{L}w_j$
is orthonormal in $L_2(0,L/\alpha)$ with mean value zero. Then we use the one-dimensional
 Lieb--Thirring inequality for the operator of order one on $\dot L_2(0,L/\alpha)$
\cite{ILMS}
 $$
 \int_0^{L/\alpha}\left(\sum_{j=1}^m\tilde\psi(x)^2\right)^2dx\le
 6\sum_{j=1}^m\|\tilde\psi_j^{(1/2)}\|_{L_2(0,L/\alpha)}^2
 \le6\sum_{j=1}^m\|\tilde\psi_j^{'}\|_{L_2(0,L/\alpha)},
 $$
where the second inequality  follows from the interpolation inequality
$$
\|\psi^{(1/2)}\|^2\le\|\psi\|\|\psi'\|
$$
and orthonormality.

The conditions $w=\mathrm{Q}w=w(x_1)$ and $\div w=0$ imply that
$(w^1)'_{x_1}=0$ and hence $w^1\equiv0$, therefore the inequality for vector functions is
essentially a scalar inequality.
Returning to $\psi_j$ and $w_j$ and to integrals over $\mathrm{T}^2_\alpha$
we obtain \eqref{LTonQ}.

Finally, we observe that all the inequalities in the theorem
also hold for suborthonormal families $\{v_j\}_{j=1}^m$, that is,
satisfying
$$
\sum_{i,j=1}^m\xi_i\xi_j(v_i,v_j)\le\sum_{i=1}^m\xi_i^2
\quad\text{for every}\ \xi\in\mathbb{R}^m,
$$
and the corresponding constants do not increase~\cite{I-MS2005}.
\end{proof}

We are now prepared to state the main result of this section in
which we estimate  the  fractal dimension of the  attractor
$\mathcal{A}$ of system~\eqref{DDNSE} on the torus
$\mathbb{T}^2_\alpha$ paying special attention to the dependence of
the estimates on $\alpha\to0^+$ and $\nu\to0^+$.

\begin{theorem}\label{T:DNSEalpha}
The damped Navier--Stokes system \eqref{DDNSE} on the elongated
torus $\mathrm{T}^2_\alpha$ has the attractor $\mathcal{A}$ and
\begin{equation}\label{estdimalpha}
\dim_F\mathcal{A}\le\left(\frac{c_\mathrm{P}}2+\sqrt{c_\mathrm{P}c_\mathrm{Q}}\right)\frac{\|\nabla g\|^2}{\nu\mu^2}
\le \left(\frac\pi{12}+\sqrt{\pi}\right)\frac{\|\nabla g\|^2}{\nu\mu^3}
\end{equation}
for all sufficiently small $\nu\le 8\mu L^2$.
\end{theorem}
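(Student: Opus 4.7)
The plan is to follow the template of the square-torus estimate (Theorem~\ref{T:estforsquare}) and feed the two Lieb--Thirring inequalities of Proposition~\ref{T:alpha}, one for each summand of the decomposition \eqref{Ldotsum}, through the standard Constantin--Foias scheme for attractor dimension. First I would check that the existence of the global attractor and the a priori bound $\|\nabla u\|^2\le\|\nabla g\|^2/\mu^2$ on $\mathcal{A}$ (both proved for the square torus above) carry over verbatim, since the orthogonality $(B(u,u),Au)=0$ and Gronwall's inequality do not use the specific geometry. The trace formula
\begin{equation*}
\sum_{j=1}^m(\mathcal{L}v_j,v_j)=-\nu\sum_{j=1}^m\|\nabla v_j\|^2-\mu m-\sum_{j=1}^m b(v_j,u,v_j)
\end{equation*}
and the pointwise Ladyzhenskaya-type inequality~\eqref{root2} together give $\sum(\mathcal{L}v_j,v_j)\le-\nu\sum\|\nabla v_j\|^2-\mu m+\tfrac{1}{\sqrt{2}}\int|\nabla u|\rho_v\,dx$.

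The next step is to split $v_j=p_j+q_j$ with $p_j:=\mathrm{P}v_j$, $q_j:=\mathrm{Q}v_j$. Both families are suborthonormal, both $p_j$ and $q_j$ are divergence-free, and $\div q_j=0$ together with $q_j$ being independent of $x_2$ forces $q_j=(0,q_j^2(x_1))$. Fourier-orthogonality of $\mathrm{P}$ and $\mathrm{Q}$ gives $\sum\|\nabla v_j\|^2=S_p+S_q$ with $S_p:=\sum\|\nabla p_j\|^2$ and $S_q:=\sum\|\nabla q_j\|^2$. Pointwise, the triangle and discrete Cauchy--Schwarz inequalities yield $\rho_v^{1/2}\le\rho_{\mathrm{P}v}^{1/2}+\rho_{\mathrm{Q}v}^{1/2}$, and then Cauchy--Schwarz in $L_2(\mathbb{T}^2_\alpha)$ produces
\begin{equation*}
\int|\nabla u|\,\rho_v\,dx\le\|\nabla u\|\bigl(\|\rho_{\mathrm{P}v}\|^{1/2}+\|\rho_{\mathrm{Q}v}\|^{1/2}\bigr)^2.
\end{equation*}
Proposition~\ref{T:alpha} then supplies $\|\rho_{\mathrm{P}v}\|^2\le c_{\mathrm{P}}S_p$ and $\|\rho_{\mathrm{Q}v}\|^2\le(c_{\mathrm{Q}}/L)\sum_j\|\nabla q_j\|$.

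The hard part will be the final balancing. The $\mathrm{P}$-only contribution is absorbed into $\nu S_p$ by two-term Young at the price of $(c_{\mathrm{P}}/(8\nu))\|\nabla u\|^2$, exactly as in the square case. The $\mathrm{Q}$-only contribution is obstructed by the mismatch that~\eqref{LTonQ} is of \emph{first} order in $\|\nabla q_j\|$ (reflecting that $\mathrm{Q}\dot L_2$ is essentially one-dimensional), so there is no direct absorption into $\nu S_q$. The natural remedy is the Cauchy--Schwarz bound $\sum\|\nabla q_j\|\le\sqrt{m}\sqrt{S_q}$ followed by a three-term Young inequality $abc\le\tfrac{1}{2}a^2+\tfrac{1}{4}b^4+\tfrac{1}{4}c^4$ applied to $\|\nabla u\|\,m^{1/4}S_q^{1/4}$, which distributes the weight across $\|\nabla u\|^2$, $\mu m$ and $\nu S_q$; the cross term $\|\nabla u\|\|\rho_{\mathrm{P}v}\|^{1/2}\|\rho_{\mathrm{Q}v}\|^{1/2}$ is handled in the same spirit and is what produces the $\sqrt{c_{\mathrm{P}}c_{\mathrm{Q}}}$ factor. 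The hypothesis $\nu\le 8\mu L^2$ should enter at precisely this point, to guarantee that the residual piece proportional to $\nu m^2/L^2$ coming from the $m$-dependent half of the three-term Young step is dominated by the reserved fraction of $\mu m$. Time-averaging, substituting $\|\nabla u\|^2\le\|\nabla g\|^2/\mu^2$, and invoking the Constantin--Foias--Chepyzhov--Ilyin dimension criterion \cite{C-F85,Ch-I} at the least $m^*$ with $q(m^*)<0$ will then yield the stated bound; substituting the numerical values $c_{\mathrm{P}}\le\pi/6$ and $c_{\mathrm{Q}}\le 6$ gives the explicit form.
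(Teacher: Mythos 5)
Your proposal misses the key structural step, and as written it would \emph{not} yield the stated constant $\tfrac{c_{\mathrm{P}}}{2}+\sqrt{c_{\mathrm{P}}c_{\mathrm{Q}}}$. You bound the trilinear term by passing immediately to the pointwise inequality \eqref{root2} and then estimating $\rho_v$ via $\rho_v^{1/2}\le\rho_{\mathrm{P}v}^{1/2}+\rho_{\mathrm{Q}v}^{1/2}$, which produces three pieces: a pure-$\mathrm{P}$ piece $\|\nabla u\|\,\|\rho_{\mathrm{P}v}\|$, a cross piece $\|\nabla u\|\,\|\rho_{\mathrm{P}v}\|^{1/2}\|\rho_{\mathrm{Q}v}\|^{1/2}$, \emph{and a pure-}$\mathrm{Q}$ \emph{piece} $\|\nabla u\|\,\|\rho_{\mathrm{Q}v}\|$. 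You then propose to control the pure-$\mathrm{Q}$ piece by a three-term Young inequality. But the paper's argument does not have a pure-$\mathrm{Q}$ piece at all: before invoking \eqref{root2} it decomposes the trilinear form $b(v_j,u,v_j)$ after splitting $u=\mathrm{P}u+\mathrm{Q}u$, $v_j=\mathrm{P}v_j+\mathrm{Q}v_j$, and uses the algebraic identities
$$
b(\mathrm{Q}v_j,u,\mathrm{Q}v_j)=0,\qquad b(\mathrm{Q}v_j,\mathrm{Q}u,\mathrm{P}v_j)=0,\qquad b(\mathrm{P}v_j,\mathrm{Q}u,\mathrm{Q}v_j)=0,
$$
which follow from $(\mathrm{Q}w)^1\equiv0$, $\partial_2\mathrm{Q}=0$, and periodicity in $x_2$. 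This collapses the sum to
$$
b(v_j,u,v_j)=b(\mathrm{P}v_j,u,\mathrm{P}v_j)+b(\mathrm{Q}v_j,\mathrm{P}u,\mathrm{P}v_j)+b(\mathrm{P}v_j,\mathrm{P}u,\mathrm{Q}v_j),
$$
i.e.\ to exactly the pure-$\mathrm{P}$ and cross pieces and nothing else. Only after this reduction does \eqref{root2} get applied, giving $\tfrac{1}{\sqrt2}\|\nabla u\|\,\|\rho_{\mathrm{P}v}\|+\sqrt2\,\|\nabla\mathrm{P}u\|\,\|\rho_{\mathrm{Q}v}\|^{1/2}\|\rho_{\mathrm{P}v}\|^{1/2}$. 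The subsequent Young inequalities then allocate the full budget $\nu(S_p+S_q)$ to these two terms alone (the cross term spends $\nu/2\,S_p+\nu S_q$ and leaves a residual $\nu m/(16L^2)$, which is where $\nu\le8\mu L^2$ is used, and note it is linear in $m$, not $m^2$). In your scheme the dissipation budget is already exhausted by these two pieces, so whatever fraction of $\nu S_q$ and $\mu m$ you reserve for the pure-$\mathrm{Q}$ piece must be taken away from the cross piece, which strictly inflates the coefficient of $\|\nabla u\|^2/\nu$ above $\tfrac{c_{\mathrm{P}}}{2}+\sqrt{c_{\mathrm{P}}c_{\mathrm{Q}}}$. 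In short: the missing idea is the exact vanishing of the $\mathrm{Q}$--$\mathrm{Q}$ interaction in the trilinear form, which is specific to the divergence-free structure on the elongated torus and cannot be recovered by Young-type balancing after the pointwise estimate has been applied.
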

\begin{proof} As before we have \eqref{trace}
\begin{equation}\label{tracealpha}
\sum_{j=1}^m\bigl({\mathcal L}(t,u_0)v_j,v_j\bigr)=
-\nu\sum_{j=1}^m\|\nabla v_j\|^2-\mu m-
\sum_{j=1}^mb(v_j,u(t),v_j),
\end{equation}
and the main task is to estimate
the last term there. The main idea~\cite{Z} is decompose the solution
$u$ and the $v_j$'s as follows
$$
u=\mathrm{P}u+\mathrm{Q}u,\qquad v_j=\mathrm{P}v_j+\mathrm{Q}v_j
$$
and use $\alpha$-independent Lieb--Thirring inequalities in Theorem~\ref{T:alpha}.
We note that since the $v_j$'s are orthonormal, both
$\mathrm{P}v_j$ and $\mathrm{Q}v_j$ are suborthonormal.

Since $\partial_1\mathrm{Q}=\mathrm{Q}\partial_1$ and $\partial_2\mathrm{Q}=\mathrm{Q}\partial_2=0$,
it follows that  $\div \mathrm{P}w=\div \mathrm{Q}w=0$ if $\div w=0$.
Since  $\mathrm{Q} u$ and $\mathrm{Q} v_j$ depend only on $x_1$,
$\mathrm{Q} u^1=\mathrm{Q} v_j^1=0$ and
$\int_0^L\mathrm{P}u(x_1,x_2)dx_2=0$, it follows that
$$
b(\mathrm{Q} v_j,u,\mathrm{Q} v_j)=0,\quad b(\mathrm{Q} v_j,\mathrm{Q} u,\mathrm{P} v_j)=0,
\quad b(\mathrm{P} v_j,\mathrm{Q} u,\mathrm{Q} v_j)=0.
$$
For example,
$$
b(\mathrm{Q} v,u,\mathrm{Q} v)=\int_0^{L/\alpha}(\mathrm{Q}v^2(x_1))^2dx_1
\int_0^L\partial_2u^2(x_1,x_2)dx_2=0
$$
by periodicity.
Therefore
$$
b(v_j,u,v_j)=b(\mathrm{P} v_j,u,\mathrm{P} v_j)+b(\mathrm{Q} v_j,\mathrm{P} u,\mathrm{P} v_j)+
b(\mathrm{P} v_j,\mathrm{P} u,\mathrm{Q} v_j).
$$
Hence, in view of \eqref{root2}
\begin{equation}\label{sumb}
\sum_{j=1}^mb(v_j,u,v_j)\le\frac1{\sqrt{2}}\|\nabla u\|\|\rho_{\mathrm{P} v}\|+ \sqrt{2}
\|\nabla\mathrm{P} u\|\|\rho_{\mathrm{Q} v}\|^{1/2}\|\rho_{\mathrm{P}v}\|^{1/2},
\end{equation}
For the first term we use \eqref{LTonP} and single out $\nu/2\sum_{j=1}^m\|\nabla \mathrm{P}v_j\|^2$:
$$
\aligned
\frac1{\sqrt{2}}\|\nabla u\|\|\rho_{\mathrm{P} v}\|
\le\frac{c_{\mathrm{P}}}{4\nu}\|\nabla u\|^2+\frac\nu 2\sum_{j=1}^m\|\nabla \mathrm{P}v_j\|^2.
\endaligned
$$
For the second term we use \eqref{LTonP}, estimate
\eqref{LTonQ} in the form
$$
\|\rho_{\mathrm{Q} v}\|^2\le
\frac{c_{\mathrm{Q}}}L\sqrt{m}\left(\sum_{j=1}^m\|\nabla  w_j\|^2\right)^{1/2},
$$
and single out the terms $\nu/2\sum_{j=1}^m\|\nabla \mathrm{P}v_j\|^2$
and $\nu\sum_{j=1}^m\|\nabla \mathrm{Q}v_j\|^2$:
\begingroup
\allowdisplaybreaks
$$
\aligned
&\sqrt{2}\|\nabla\mathrm{P} u\|\|\rho_{\mathrm{Q} v}\|^{1/2}\|\rho_{\mathrm{P}v}\|^{1/2}
\le\\
&\sqrt{2}\|\nabla u\|(c_{\mathrm{P}}c_{\mathrm{Q}})^{1/4}\left(\sum_{j=1}^m\|\nabla \mathrm{P}v_j\|^2 \right)^{1/4}
\left(\frac m{L^2}\sum_{j=1}^m\|\nabla \mathrm{Q}v_j\|^2 \right)^{1/8}\le\\
&\frac{\|\nabla u\|^2\sqrt{c_{\mathrm{P}}c_{\mathrm{Q}}}}{2\nu}+
\nu\left(\sum_{j=1}^m\|\nabla \mathrm{P}v_j\|^2 \right)^{1/2}
\left(\frac m{L^2}\sum_{j=1}^m\|\nabla \mathrm{Q}v_j\|^2 \right)^{1/4}\le\\
&\frac{\|\nabla u\|^2\sqrt{c_{\mathrm{P}}c_{\mathrm{Q}}}}{2\nu}+
\frac\nu 2\sum_{j=1}^m\|\nabla \mathrm{P}v_j\|^2+
\nu\sum_{j=1}^m\|\nabla \mathrm{Q}v_j\|^2
 + \frac {\nu m}{16L^2}\,.
\endaligned
$$
\endgroup
Since $\|\nabla v\|^2=\|\mathrm{P}\nabla v\|^2+\|\mathrm{Q}\nabla v\|^2$,
we obtain
$$
\sum_{j=1}^mb(v_j,u(t),v_j)\le\frac1{2\nu}\left(\frac{c_\mathrm{P}}2+
\sqrt{c_\mathrm{P}c_\mathrm{Q}}\right)\|\nabla u(t)\|^2+
\nu\sum_{j=1}^m\|\nabla v_j\|^2+ \frac {\nu m}{16L^2}\,.
$$
Substituting this into \eqref{tracealpha} and using \eqref{estH1}
we finally obtain the estimate for the sum of $m$ global Lyapunov exponents:
$$
q(m)\le\frac1{2}\left(\frac{c_\mathrm{P}}2+
\sqrt{c_\mathrm{P}c_\mathrm{Q}}\right)\frac{\|\nabla g\|^2}{\nu\mu^2}-
\mu m\left(1-\frac {\nu }{16\mu L^2}\right).
$$
If $\nu\le 8\mu L^2$, then for
$$
m^*=\left(\frac{c_\mathrm{P}}2+\sqrt{c_\mathrm{P}c_\mathrm{Q}}\right)\frac{\|\nabla g\|^2}{\nu\mu^3}
$$
$q(m^*)\le0$, and hence
$$
\dim_F\mathcal{A}\le m^*.
$$
The proof is complete.
\end{proof}

We finally point out that estimate \eqref{estforsquare} for the square torus is
sharp as $\nu\to0^+$ and estimate \eqref{estdimalpha} is sharp as both
$\nu\to0^+$ and $\alpha\to0^+$. The lower bounds for the dimension of the attractor
are based on the characterization of the  attractor as the section at any given time
of the set of all complete trajectories bounded for $t\in \mathbb{R}$. Therefore
all stationary solutions and their unstable manifolds  lie on the attractor.
Such an unstable  stationary solution for the 2d periodic
 Navier--Stokes was first constructed in~\cite{M-S}
and is called the Kolmogorov flow.
The construction was generalized in \cite{Liu} to prove that the estimate
for the dimension obtained in \cite{C-F-T}  is logarithmically sharp.
It also applies to our periodic damped Navier--Stokes system \cite{I-M-T,I-T}.
In particular, it was shown in \cite{I-T} that
the right-hand side
\begin{equation}\label{Kolmf}
g=g_s=\begin{cases}g_1=c_1\nu^2 s^3\sin sx_2,\\
g_2=0,\end{cases}
\end{equation}
where $c_1$ is an absolute constant  and  $\mathbb{T}^2_\alpha=(0,2\pi/\alpha)\times(0,2\pi)$,
produces the stationary solution with unstable manifold
of dimension
$$
d=c_2\frac{s^2}\alpha,
$$
where $s\gg1$. Setting $s:=\sqrt{\frac\mu\nu}$ we find that
$\dim\mathcal{A}\ge d=c_2\frac\mu{\alpha\nu}$. Since $g$ is independent of $x_1$,
it follows that
$$
\|\nabla g_s\|^2=c_3\frac{\nu^4s^8}\alpha=c_3\frac{\mu^4}\alpha,
$$
so that for $g=g_s$ the dimensionless number $\frac{\|\nabla g\|^2}{\nu\mu^3}$ becomes
$$
\frac{\|\nabla g\|^2}{\nu\mu^3}=c_3\frac\mu{\alpha\nu}\,
$$
and the upper bound in \eqref{estdimalpha} for $\mathcal{A}=\mathcal{A}_s$ is supplemented with a sharp
lower bound
$$
\dim \mathcal{A}_s\ge\frac{c_2}{c_3}\frac{\|\nabla g\|^2}{\nu\mu^3},
$$
where $c_2$ and $c_3$ are some absolute constants (which can be calculated).

\subsection*{Acknowledgements}\label{SS:Acknow}
A.I.  acknowledges financial support
from the Russian Science Foundation (grant no. 14-21-00025).

\end{document}